\newcommand{\RR}{\mathbb{R}}
\renewcommand{\SS}{\mathbb{S}}
\DeclareMathOperator{\Tr}{Tr}
\DeclareMathOperator{\rank}{rank}
\DeclareMathOperator{\Diag}{Diag}
\DeclareMathOperator{\cut}{cut}
\theoremstyle{plain}
\newtheorem{theorem}{Theorem}
\newtheorem{lemma}{Lemma}
\newtheorem{corollary}{Corollary}
\newtheorem{conjecture}{Conjecture}
\theoremstyle{definition}
\newtheorem{definition}{Definition}
\newtheorem{example}{Example}
\theoremstyle{remark}
\title[Optimal solutions and ranks in the max-cut SDP]{Optimal solutions and ranks\\ in the max-cut SDP}
\author{Daniel Hong}
\address{Skyline High School, Sammamish, WA 98075}
\email{greenhong18@gmail.com}
\author{Hyunwoo Lee}
\address{Interlake High School, Bellevue, WA 98008}
\email{hywlee03@gmail.com}
\author{Alex Wei}
\address{Interlake High School, Bellevue, WA 98008}
\email{alexwei@outlook.com}
\date{\today}
\begin{document}
\maketitle

\begin{abstract}
    The max-cut problem is a classical graph theory problem which is NP-complete.
    The best polynomial time approximation scheme relies on \emph{semidefinite programming} (SDP).
    We study the conditions under which graphs of certain classes have rank~1 solutions to the max-cut SDP.
    We apply these findings to look at how solutions to the max-cut SDP behave under simple combinatorial constructions.
    Our results determine when solutions to the max-cut SDP for cycle graphs are rank~1. We find the solutions to the max-cut SDP of the vertex~sum of two graphs. We then characterize the SDP solutions upon joining two triangle graphs by an edge~sum.

\emph{Keywords:} Max-cut, Semidefinite program, Rank, Cycle graphs, Vertex sum, Edge sum

\end{abstract}

\tableofcontents

\section{Introduction}

Consider a graph  $G=(V,E)$ with vertex set $V=[n],$ edge set \(E,\) and fixed weights $\{w_{ij}\}_{ij\in E}$ assigned to the edges. Consider partitions, or \emph{cuts,} of the vertex set \(V = V_1 \sqcup V_2.\) The \emph{max-cut problem} asks for the maximum possible sum of all weights between vertices on opposite sides of the cut. In other words, we wish to maximize \[\cut(V_1,V_2) := \sum_{i \in V_1, j \in V_2} w_{ij}\]across all partitions $V = V_1 \sqcup V_2$.

The max-cut problem is a key problem in theoretical computer science and operations research.
In particular, it is one of Karp's 21 NP-complete problems~\cite{karp1972reducibility}.
Furthermore, it has applications in physics and circuit design~\cite{baharona1988applications}. In statistical physics and magnetism, the Edwards-Anderson model for the ground states of spin glasses is an optimization problem in \(\pm1\)-variables and can be reduced to the max-cut problem. In Very-Large-Integrated-Scale (VLSI) circuit design, the problem of reducing the number of vias, or connections between layers of a circuit board, can be reduced to the max-cut problem.

Although the max-cut problem is NP-complete~\cite{karp1972reducibility}, a breakthrough paper of Goemans and Williamson~\cite{goemans1995improved} proved that the max-cut problem can be approximated in polynomial time up to a factor of 0.87854 using a technique known as \emph{semidefinite programming} (SDP).

A semidefinite program is an optimization problem where we minimize a linear function on the entries of a matrix subject to two types of constraints:
the matrix is \emph{positive semidefinite} (i.e., its eigenvalues are non-negative)
and its entries satisfy some linear equations.

Semidefinite programs have far-reaching applications \cite{vandenberghe1996semidefinite,wolkowicz2012handbook}, most notably in approximation algorithms, which we discuss in this paper.

The max-cut problem can be relaxed to a semidefinite program. The max-cut SDP requires the following definitions.

The \emph{Laplacian matrix} of a weighted graph $L(G,w)$ is the $n \times n$ symmetric matrix with entries
\begin{align*}
    L(G,w)_{ij} :=
    \begin{cases}
        -w_{ij} & \text{if } ij \in E \\
        \sum_k w_{ik} & \text{if } i=j\\
        0 & \text{otherwise.}
    \end{cases}
\end{align*}
We use the shorthand $X \succeq 0$ to indicate that the matrix $X$ is {positive semidefinite}, and we use $\bullet$ to represent the Frobenius inner product \(A \bullet B= \Tr(A^TB)\) for real square matrices \(A\) and \(B\) of the same dimension. Furthermore, \(\SS^n\) denotes the space of $n\times n$ symmetric matrices.

The max-cut SDP is presented below, the derivation of which we will present in \Cref{sec:background}.
\begin{align*}
    \max_{X \in \SS^n} \quad &\frac{1}{4} L(G,w) \bullet X \\
    \text{s.t.}\quad
    &X_{ii}=1 \text{ for } i \in [n]\\
     &X \succeq 0.
\end{align*}

In this paper, we study how the optimal solution to the max-cut SDP behaves under simple graph operations.
The operations covered in the paper are the \emph{vertex sum}, where two graphs are joined at a common vertex, and the \emph{edge sum}, where two graphs are joined at a common edge. These operations are special instances of a \emph{clique sum}, where two graphs are joined at a common clique \(K_n\) for some \(n.\)

In this paper we pay special attention to the rank of the optimal solution obtained.
In general, low rank solutions to the max-cut SDP are desirable.
In particular, if the optimal solution is rank~1 then we may recover an exact optimal solution of the max-cut problem.
Our current results characterize when cycle graphs have rank~1 solutions to their max-cut SDPs.
We also study the behaviour for random weights.
For random normally distributed weights, we experimentally estimate the probability that the optimal solution is rank~1.

\subsection*{Related work}
The max-cut SDP was first studied in~\cite{delorme1993laplacian},
where the authors showed promising experimental results.
It gained major attention after the seminal work of Goemans and Williamson~\cite{goemans1995improved},
where they introduced a rounding technique
capable of producing a cut which is optimal up to a factor of~0.87854.
The gives the best known polynomial time approximation algorithm to the max-cut problem.

Having a low rank solution has many important computational and theoretical consequences.
Indeed, if the optimal solution has low rank then one can take advantage of faster optimization routines~\cite{burer2002rank},
and the approximation factor can be also be improved~\cite{avidor2005rounding,de2020integrality}.

The rank of the optimal solution matrix is closely related to the geometric structure of the SDP.
The geometry of the max-cut SDP has been investigated in
\cite{laurent1995positive,laurent1996facial,silva2018strict}.
The paper \cite{nagy2014forbidden} leverages geometric information to provide upper bounds on the rank of the solution.

It is possible to construct more sophisticated SDPs for the max-cut problem that rely on a technique known as the \emph{SOS method}~\cite{blekherman2012semidefinite}.
It is an open problem whether such sophisticated SDPs can improve the approximation factor.
This problem has received quite a lot of interest in recent years,
and it is closely related to a twenty-year old conjecture from computer science known as the \emph{unique games conjecture}~\cite{khot2007optimal}.
We do not consider these advanced SDPs here.

\subsection*{Structure}
The structure of this paper is as follows.
In \Cref{sec:background}, we derive and state the max-cut semidefinite program and provide definitions related to weighted graphs and clique sums of graphs.
In \Cref{sec:ranks}, we discuss experimental findings on the ranks of solutions to the max-cut SDP.
In \Cref{sec:ranksthm}, we discuss theorems which characterize the ranks of solutions to the max-cut SDP, as well as solutions to the max-cut SDP for vertex sums and an edge sum of triangles.
In \Cref{sec:proofs}, we provide proofs for theorems discussed in \Cref{sec:ranksthm}.
In \Cref{sec:future work}, we discuss unresolved conjectures not covered in \Cref{sec:proofs} and potential future avenues for research.

\section{Background}\label{sec:background}
\subsection{Semidefinite programs}

Let \(C \in \SS^n\) be an \(n\times n\) cost matrix.
Consider \(m\) constraint matrices \(A_1,A_2, \ldots, A_m\in \SS^n\),
as well as a constraint vector \( b \in \RR^m.\)
A \emph{semidefinite program} is an optimization problem of the form
\begin{align*}
    \max_{X} \quad &C \bullet X \\
    \text{s.t.} \quad & A_i \bullet X = b_i ~~ \forall 1\le i \le m \\
    &X\succeq 0.
\end{align*}

For each semidefinite program, there exists an associated \emph{dual semidefinite program} The dual SDP is formulated as follows (here \(b, A_i\) and \(C\) are the same as above):
\begin{align*}
    \min_{y,S} \quad &b^Ty \\
    \text{s.t.} \quad & S = \sum_{i=1}^n y_iA_i - C\\
    & S \succeq 0.
\end{align*}
We denote the primal value \(p^*\) to represent the maximal value achieved by the SDP across its domain, and we similarly define \(d^*\) as the dual optimal value. It is always the case that the primal value \(p^*\) is at most the dual value \(d^*,\) or $p^* \leq d^*$.
This is known as \emph{weak duality}.
In case that the two values agree, or $p^* = d^*$,
we say that \emph{strong duality} holds.
It is known that strong duality holds under mild assumptions,
see e.g. \cite[Thm~3.1]{vandenberghe1996semidefinite}. 

\subsection{Max-cut SDP}

In this section we explain the connection between the max-cut problem and its SDP relaxation.

Given a cut \((V_1,V_2)\), consider assigning \(1\) to all vertices in \(V_1\) and \(-1\) to all vertices in \(V_2\).
Then we can treat a cut as a vector \(x\) with \(x_i^2=1\) for all \(i\). Now, the quantity \(1-x_ix_j,\) for any edge \(ij \in E,\) will be \(0\) if \(i\) and \(j\) are in the same group, and \(2\) otherwise. Thus, we arrive at
\begin{align*}
    \cut(V_1,V_2)
    = \sum_{i \in V_1, j \in V_2} \!\!\!\!w_{ij}
    &=\frac{1}{2}\sum_{ij \in E} w_{ij}(1{-}x_ix_j)
    =\frac{1}{4} \sum_{i,j} L(G,w)_{ij}x_ix_j.
\end{align*}
Let \(X=xx^T \in \SS^n\).
Note that \(X\) is rank~1, positive semidefinite, and has all diagonal entries equal to \(1\). Moreover, all such matrices \(X\) satisfying those three conditions can be rewritten as \(X=xx^T\) for some vector \(x\) with \(x_i^2=1\) for all \(i.\)

Thus, the max-cut problem can be rephrased as
\begin{equation*}
\begin{aligned}
\max_{V_1,V_2} \;\;\cut(V_1,V_2)
\end{aligned}
=
\begin{aligned}
\max_{x} \;\;&\tfrac{1}{4}\, x^T L(G,w) x\\
\;\text{ s.t }\;\;&x_i^2 = 1\;\;\forall i
\end{aligned}
=
\begin{aligned}
\max_{X} \;\;&\tfrac{1}{4}\, L(G,w) \bullet X\\
\;\text{ s.t }\;\;&X_{ii} = 1\;\;\forall i\\
&X \succeq 0, \; \rank X = 1.
\end{aligned}
\end{equation*}
The last optimization problem involves the nonconvex constraint $\rank X=1$.
We can relax this problem to a semidefinite program by getting rid of the rank~1 constraint. By doing so, we arrive at the primal max-cut SDP.

\begin{definition}
Let \(C=\frac{1}{4}L(G,w)\). The \emph{primal} max-cut SDP is the following relaxation of the max-cut problem:
\begin{align*}
    \max_{X \in \SS^n} \quad &C \bullet X \\
    \text{s.t.}\quad
    &X_{ii}=1 \text{ for } i \in [n]\\
     &X \succeq 0.
\end{align*}
\end{definition}

Henceforth, the max-cut SDP will refer to the primal max-cut SDP.
Note that if the optimal solution $X$ of the SDP has rank~1,
then we may write it in the form $X=x x^T$,
and hence we may recover the optimal cut.
We say that the relaxation is \emph{exact} if this happens.

We now introduce the dual of this semidefinite program. Note that in the situation above, the constraint variables \(A_i\) and \(b_i\) correspond to each diagonal entry on the matrix. In particular, \(A_i\) is a matrix with \(ii\)th entry \(1\) and all other entries \(0\), while \(b_i=1\). We can then define the dual for the max-cut~SDP.

\begin{definition}
The \emph{dual} max-cut SDP is as follows:
\begin{align*}
    \min_{y\in \RR^n, S \in \SS^n}\quad &\sum y_i \\
    \text{s.t.}\quad S&=\Diag(y)-C \\
    S &\succeq 0.
\end{align*}
\end{definition}

We say a matrix \(X\) is \emph{primal feasible} if it satisfies all the constraints of the primal SDP. Similarly, we say a matrix \(S\) is \emph{dual feasible} if it satisfies all the constraints of the dual SDP.

The following is a well known theorem that characterizes the optimal solutions to any SDP satisfying strong duality. Note that all max-cut SDPs are known to satisfy strong duality.

\begin{theorem}[{\cite[eq.(33)]{vandenberghe1996semidefinite}}]\label{thm:optcondition}
Suppose we have two matrices $\bar X, \bar S$. The following three statements are satisfied simultaneously if and only if \(\bar X \) and \(\bar S\) are optimal solutions to the primal and dual max-cut SDP, respectively:
\begin{itemize}
    \item \(\bar X\) is primal feasible
    \item \(\bar S\) is dual feasible
    \item Complementary slackness, or \(\bar X \bar S = 0.\)
\end{itemize}

\end{theorem}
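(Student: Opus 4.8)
The plan is to read the characterization off from weak and strong duality, using the special shape of the max-cut constraints to identify the duality gap with the Frobenius inner product $\bar X \bullet \bar S$. The starting point is a one-line algebraic identity: if $\bar X$ is primal feasible and $\bar S$ is dual feasible with associated $\bar y$ (so $\bar S = \Diag(\bar y) - C$), then using $\bar X_{ii} = 1$ for all $i$ we get
\begin{align*}
    \bar X \bullet \bar S = \bar X \bullet \Diag(\bar y) - \bar X \bullet C = \sum_{i} \bar y_i - C \bullet \bar X,
\end{align*}
which is exactly the gap between the dual objective value at $\bar S$ and the primal objective value at $\bar X$. Since $\bar X, \bar S \succeq 0$, this quantity is nonnegative (the Frobenius inner product of two positive semidefinite matrices is nonnegative), and since max-cut SDPs satisfy strong duality, $p^* = d^*$; combined with $C \bullet \bar X \le p^*$ and $\sum_i \bar y_i \ge d^*$ (weak duality / optimality of $p^*$ and $d^*$), the gap $\bar X \bullet \bar S$ is $0$ if and only if $\bar X$ attains $p^*$ and $\bar S$ attains $d^*$, i.e. both are optimal.

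It then remains to connect the scalar statement $\bar X \bullet \bar S = 0$ with the matrix statement $\bar X \bar S = 0$. The key lemma is: for $P, Q \in \SS^n$ with $P, Q \succeq 0$, one has $P \bullet Q = 0$ if and only if $PQ = 0$. For this, write $P = P^{1/2} P^{1/2}$ and $Q = Q^{1/2} Q^{1/2}$ with $P^{1/2}, Q^{1/2} \succeq 0$, and compute $P \bullet Q = \Tr(PQ) = \Tr(Q^{1/2} P^{1/2} P^{1/2} Q^{1/2}) = \Tr\!\left( (P^{1/2}Q^{1/2})^T (P^{1/2}Q^{1/2}) \right) \ge 0$, with equality forcing $P^{1/2}Q^{1/2} = 0$; then $PQ = P^{1/2}(P^{1/2}Q^{1/2})Q^{1/2} = 0$. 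The converse implication is immediate since $PQ = 0$ gives $\Tr(PQ) = 0$.

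Assembling the two ingredients yields both directions. If $\bar X$ and $\bar S$ are optimal, then primal and dual feasibility hold by definition, the gap above equals $d^* - p^* = 0$, so $\bar X \bullet \bar S = 0$, and the lemma gives $\bar X \bar S = 0$. Conversely, if $\bar X$ is primal feasible, $\bar S$ is dual feasible, and $\bar X \bar S = 0$, the lemma gives $\bar X \bullet \bar S = 0$, so $C \bullet \bar X = \sum_i \bar y_i$; since this common value lies between $p^*$ and $d^*$ and $p^* = d^*$, it must equal both, so $\bar X$ and $\bar S$ are optimal. I expect the main technical point to be the positive-semidefinite lemma relating $P \bullet Q = 0$ to $PQ = 0$ — in particular checking that the symmetric-square-root manipulation is valid and that one really recovers the matrix equation $\bar X \bar S = 0$ rather than just the trace being zero — while everything else is bookkeeping with weak and strong duality once the gap identity is in place.
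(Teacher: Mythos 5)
Your proof is correct. Note that the paper does not actually prove this theorem --- it is quoted from Vandenberghe and Boyd \cite[eq.~(33)]{vandenberghe1996semidefinite} --- and your argument (the duality-gap identity $\bar X \bullet \bar S = \sum_i \bar y_i - C \bullet \bar X$ using $\bar X_{ii}=1$, strong duality, and the lemma that $P \bullet Q = 0 \Leftrightarrow PQ = 0$ for positive semidefinite $P,Q$ via symmetric square roots) is exactly the standard argument given in that reference, so there is nothing to compare against within the paper itself.
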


\subsection{Clique sums of graphs}

In this paper, we study ranks of solutions to the primal max-cut SDP in relation to clique-sums. A \emph{clique-sum} of two graphs \(G\) and \(H\) which both contain a clique graph \(K_n\), and we join \(G\) and \(H\) along this clique graph \(K_n\) to form a new graph \(F\), which has two vertex sets \(V_1\) and \(V_2\) such that the following are true:
\begin{itemize}
    \item The induced subgraph of \(F\) on \(V_1\) is isomorphic to \(G,\)
    \item The induced subgraph of \(F\) on \(V_2\) is isomorphic to \(H,\)
    \item The induced subgraph of \(F\) on \(V_1 \cap V_2\) is isomorphic to \(K_n\), and
    \item \(V_1 \cup V_2\) is the vertex set of \(H\).
\end{itemize}
We then define a \emph{vertex sum} as a clique-sum with the clique \(K_1,\) and an \emph{edge sum} as a clique-sum with the clique \(K_2\).

\section{Experiments on rank of max-cut SDP}\label{sec:ranks}
Rank is important to the max-cut problem because lower rank solutions will be able to yield better approximate solutions to the max-cut SDP. We have already noted that a rank~1 solution will exactly find the solution to the max-cut problem, while it has been shown that rank~2 solutions yield better approximation algorithms for the max-cut problem.

Given a weighted graph $G$, the set of optimal primal (or dual) solutions might not be unique.
We let $r_P(G)$ be the largest rank among all possible primal optimal solutions.
Similarly, we may define $r_D(G)$ to be the largest rank among all possible dual optimal solutions.
We say that strict complementarity holds if $r_P(G) + r_D(G) = n$.
It is known that strict complementarity holds generically \cite{alizadeh1997complementarity}.
Hence, we restrict our attention to the primal rank $r_P(G)$.

We study the primal ranks \(r_P(G)\) for random families of weighted graphs.
For a given graph \(G=([n],E)\), we consider two probability distributions on the weights $\{w_{ij} : ij \in E\}$.
In the first probability distribution, each weight $w_{ij}$ is an independent Gaussian random variable.
The second probability distribution is similar,
except that we force the weights $w_{ij}$ to be nonnegative.

The probabilities shown in \Cref{tab:probabilities} are the experimental values for the probability that the solution to the primal max-cut SDP of a graph returns a certain rank.

To obtain this data, we generated 1000 different weights for each graph \(G=(V,E)\) corresponding to random points on the \(|E|\)-dimensional unit sphere (for arbitrary weights), then 1000 different weights corresponding to random points on the \(|E|\)-dimensional unit sphere in the first orthant (for positive weights). We then counted the number of times that each of the ranks was obtained.

Known algorithms will return the highest rank solution, so graphs which yield multiple solutions or solutions of multiple different ranks will not be shown in the following table.

In \Cref{tab:probabilities}, ``Diamond'' represents a \(K_4\) graph with the edge removed, ``Butterfly'' is the vertex sum of \(K_3\) and \(K_3,\) and the ``Fish'' graph is the vertex sum of \(K_3\) and \(C_4.\)

\begin{table}[h]
\caption{Rank distribution of optimal solutions}
\label{tab:probabilities}
\begin{center}
\begin{tabular}{ |c|c|c|c|c|c|c| }
 \hline
  & \multicolumn{3}{| c |}{Arbitrary Weights} & \multicolumn{3}{| c |}{Positive Weights} \\
 \hline
 Graph & Rank~1 & Rank~2 & Rank~3 & Rank~1 & Rank~2 & Rank~3\\
 \hline
 \(K_3\) & 85\% & 15\% & 0\% & 69\% & 31\% & 0\% \\
 \(C_4\) & 77\% & 23\% & 0\% & 100\% & 0\% & 0\% \\
 Diamond & 71\% & 29\% & 0\% & 65\% & 35\% & 0\% \\
 \(C_5\) & 73\% & 27\% & 0\% & 45\% & 55\% & 0\% \\
 Butterfly & 72\% & 25\% & 3\% & 50\% & 42\% & 8\% \\
 \(C_6\) & 70\% & 30\% & 0\% & 100\% & 0\% & 0\% \\
 Fish & 62\% & 34\% & 4\% & 69\% & 31\% & 0\% \\
 \hline
\end{tabular}
\end{center}
\end{table}

We know a few facts about the ranks of the optimal matrices. In particular, there always exists a primal matrix $\bar X$ with rank $r$ such that $\binom{r+1}{2} \leq n$, and there always exists a dual matrix $\bar S$ with rank $s$ such that $s \leq n\!-\!1$, see \cite[Thm~2.1]{pataki1998rank}.

However, there can also exist solutions with rank greater than the primal value $p$. For instance, the butterfly graph has an approximately 2\% chance of returning a rank~3 primal solution, despite the fact that there must exist an optimal solution of rank~2 or lower. When this occurs, we know that the primal solution is not unique. For most graphs, there will be a unique solution to the primal matrix. 

\section{Solutions and Ranks for Particular Classes of Graphs}\label{sec:ranksthm}

\subsection{Rank~1 solutions for cycles}

We aim to understand the conditions in which the rank of a primal solution is 1 for certain graphs.

We will prove the following theorem regarding the rank of solutions to the primal max-cut SDPs of cycle graphs in \Cref{sec:proofs}.

\begin{theorem}\label{thm:cyclerank1}
Consider a cycle graph \(C_n\) with \(V=[n]\) and \(E\) consisting of all edges \((i,i+1)\) with indices taken modulo \(n\). For simplicity, let \(w_i\)  denote the weight of the edge \((i,i+1)\). There exists a rank~1 solution to the primal SDP \(L(G,w)\) if and only if at least one of the following statements is true:
\begin{itemize}
    \item There are an even number of positively weighted edges in the graph.
    \item There exists a weight \(w_m\) such that
    \[\frac{1}{|w_m|} \ge \sum_{i \neq m} \frac{1}{|w_i|}.\]
\end{itemize}
\end{theorem}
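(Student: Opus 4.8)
The plan is to convert the existence of a rank~1 primal optimal solution, via the complementary slackness criterion of \Cref{thm:optcondition}, into a positive-semidefiniteness statement about an explicit dual matrix, and then to resolve that statement by a congruence transformation and a Cauchy--Schwarz estimate on the cycle.

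First I would set up the reduction. A rank~1 primal feasible matrix is exactly $\bar X = xx^T$ with $x\in\{\pm1\}^n$, and then $\bar X\bar S=0$ is equivalent to $\bar S x=0$. Dual feasible matrices are $\bar S=\Diag(y)-\tfrac14 L(C_n,w)$ with $\bar S\succeq0$; since the off-diagonal entries of $\bar S$ are already fixed by $C$, the $n$ equations $\bar S x=0$ pin down $y$, hence $\bar S$, uniquely in terms of $x$ --- call it $\bar S(x)$. By \Cref{thm:optcondition}, a rank~1 primal optimal solution exists if and only if $\bar S(x)\succeq0$ for some cut $x$, and any such $x$ is necessarily a maximum cut, since its objective $\cut(x)=\tfrac14 x^T L(C_n,w)x$ then equals the primal optimum, which is at least the max-cut value.

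Next I would diagonalize. Conjugation by $D=\Diag(x)$ is a congruence, so $\bar S(x)\succeq0$ iff $D\bar S(x)D\succeq0$, and a direct computation gives $D\bar S(x)D=-\tfrac14 L(C_n,v)$, where $v_i:=x_ix_{i+1}w_i$ equals $w_i$ on edges not in the cut and $-w_i$ on edges in the cut. So the question becomes: for which maximum cuts is $L(C_n,v)\preceq0$? Recalling that cut-sets of $C_n$ are exactly the even-size edge subsets: if the number of positively weighted edges is even, a maximum cut cuts exactly those edges, so $v_i=-|w_i|$ for every $i$ and $L(C_n,v)=-L(C_n,|w|)\preceq0$ automatically, which gives the first bullet. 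If that number is odd, a short counting argument --- using $\sum_{i\in S}w_i=\sum_{w_j>0}w_j-\sum_{i\in S\triangle P}|w_i|$ where $S$ is the set of cut edges and $P$ the set of positively weighted edges, together with the fact that $|S\triangle P|$ is odd --- shows that every maximum cut is obtained from ``cut all positive edges'' by toggling exactly one edge $m$ with $|w_m|=\min_i|w_i|$; for that cut $v_i=-|w_i|$ for $i\ne m$ and $v_m=+|w_m|$, so $L(C_n,v)=|w_m|L_m-\sum_{i\ne m}|w_i|L_i$ with $L_i=(e_i-e_{i+1})(e_i-e_{i+1})^T$.

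It then remains to check that $L(C_n,v)\preceq0$, i.e.\ $|w_m|(z_m-z_{m+1})^2\le\sum_{i\ne m}|w_i|(z_i-z_{i+1})^2$ for all $z\in\RR^n$, holds precisely when $\tfrac1{|w_m|}\ge\sum_{i\ne m}\tfrac1{|w_i|}$. Writing $z_m-z_{m+1}=-\sum_{i\ne m}(z_i-z_{i+1})$ (the telescoping identity around the cycle) and applying Cauchy--Schwarz with weights $1/|w_i|$ yields the ``if'' direction, while choosing $z$ with $z_i-z_{i+1}=1/|w_i|$ makes Cauchy--Schwarz an equality and yields the ``only if'' direction. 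Finally, since $\tfrac1{|w_m|}-\sum_{i\ne m}\tfrac1{|w_i|}=\tfrac2{|w_m|}-\sum_i\tfrac1{|w_i|}$ is maximized over $m$ by a minimum-magnitude edge, ``there exists $m$ satisfying the inequality'' coincides with ``the inequality holds for a minimum-magnitude edge'', matching the theorem's second bullet. The main obstacle I expect is the combinatorial step identifying all maximum cuts and the resulting sign pattern of $v$, and aligning the ``for some maximum cut'' quantifier on the SDP side with the ``there exists $m$'' quantifier in the statement; weights equal to $0$ are an edge case where the second condition degenerates to $\infty\ge\cdots$ and both sides of the equivalence hold, and should be dispatched directly. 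The analytic core, once the reduction is in place, is just the Cauchy--Schwarz inequality above.
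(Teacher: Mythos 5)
Your proposal is correct, and the front half of it is essentially the paper's argument in different clothing: your congruence $D\bar S(x)D=-\tfrac14 L(C_n,v)$ with $v_i=x_ix_{i+1}w_i$ is exactly \Cref{lem:rankone}, and your symmetric-difference count identifying the maximum cuts (all positive edges cut when their number is even; otherwise toggle one minimum-magnitude edge) matches the paper's ``at most one $w_ix_ix_{i+1}$ can be positive, else flip the arc between two offending edges'' argument. Where you genuinely diverge is the analytic core. The paper certifies $-L(C_n,v)\succeq0$ by computing leading principal minors: an induction shows the lower-right $m\times m$ minor is the $m$-th elementary symmetric sum of $-w_{n-m},\dots,-w_n$, and the binding constraint $d_{n-1}\ge0$ factors as $\prod_i(-w_i)\sum_i(-1/w_i)\ge0$, yielding the reciprocal inequality. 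You instead test the quadratic form directly: writing $L(C_n,v)=|w_m|L_m-\sum_{i\ne m}|w_i|L_i$ with $L_i=(e_i-e_{i+1})(e_i-e_{i+1})^T$, using the telescoping identity $z_m-z_{m+1}=-\sum_{i\ne m}(z_i-z_{i+1})$, and applying Cauchy--Schwarz with weights $1/|w_i|$, with the extremal $z$ ($z_i-z_{i+1}=1/|w_i|$) giving the converse. Your route buys a cleaner and more robust argument: it avoids the determinant induction entirely (and the paper's imprecision of checking only leading minors and only connected-arc submatrices), it produces an explicit certificate vector in the null space at equality, and the Cauchy--Schwarz step generalizes more readily (e.g.\ to a single ``positive'' edge in other one-cycle situations). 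The paper's minor computation, in exchange, gives the full list of principal minors, which it reuses implicitly elsewhere. Two small points to nail down in a full write-up: you need dual attainment (guaranteed since the identity is strictly primal feasible) to run the ``only if'' direction through \Cref{thm:optcondition}, and the zero-weight degenerations you flag should indeed be dispatched separately, since $1/|w_i|$ is then undefined on both sides of the equivalence.
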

The following corollary also holds:
\begin{corollary}\label{cor:uniquecycsol}
If a cycle graph has a rank~1 solution to its corresponding max-cut primal SDP, then the rank~1 solution is unique.
\end{corollary}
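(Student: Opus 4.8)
The plan is to leverage the optimality conditions in \Cref{thm:optcondition}, particularly complementary slackness, together with the structural information about cycle solutions obtained in the proof of \Cref{thm:cyclerank1}. Suppose $C_n$ has a rank~1 primal optimal solution $\bar X = \bar x \bar x^T$ with $\bar x \in \{\pm 1\}^n$; this corresponds to a cut, and $\bar x$ is determined up to global sign, so $\bar X$ itself is a single well-defined matrix. We want to show no other primal optimal $X'$ exists. First I would fix a dual optimal solution $\bar S = \Diag(y) - C$; since strong duality holds for the max-cut SDP, any primal optimal $X'$ must satisfy $X' \bar S = 0$, i.e.\ $\operatorname{col}(X') \subseteq \ker \bar S$. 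So the key is to show $\operatorname{rank} \bar S = n-1$, forcing $\ker \bar S$ to be the one-dimensional span of $\bar x$, which in turn forces $X' = \bar x \bar x^T = \bar X$ (using $X'_{ii}=1$ to pin down the scaling and $X' \succeq 0$ to pin down that it is $\bar x\bar x^T$ rather than $-\bar x\bar x^T$).

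The heart of the argument is therefore analyzing $\bar S$ for the cycle. Here I would reuse the explicit dual certificate constructed in the proof of \Cref{thm:cyclerank1}: in each of the two cases of that theorem, the proof produces a $y$ (equivalently a diagonal $\Diag(y)$) with $\bar S = \Diag(y) - \frac14 L(C_n,w) \succeq 0$ and $\bar S \bar x = 0$. The matrix $\bar S$ is a symmetric "periodic tridiagonal" (cycle-Jacobi) matrix: its off-diagonal entries on the cycle are $-\frac14 \cdot(-w_i) = \frac{w_i}{4} \neq 0$ (edges have nonzero weight), and it has zeros elsewhere. I would argue that such a matrix has corank exactly $1$: a vector in its kernel is determined, via the three-term recurrence coming from each row, by a single initial value once we know one coordinate, because each nonzero off-diagonal entry lets us solve for the "next" coordinate around the cycle; the cycle closes up consistently precisely because $\bar x$ is already known to be in the kernel. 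Hence $\dim \ker \bar S = 1$. (One must check the recurrence does not degenerate — this is where nonvanishing of the weights $w_i$ is essential, and it is the one place a small case check is needed, e.g.\ ruling out that two consecutive kernel coordinates both vanish, which would propagate to the zero vector.)

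Concretely, the steps in order: (1) invoke \Cref{thm:cyclerank1} to get a rank~1 primal optimum $\bar X = \bar x\bar x^T$ and, from its proof, a matching dual optimum $\bar S$ with $\bar S\bar x = 0$; (2) observe $\bar S$ is a cycle-Jacobi matrix with all cycle off-diagonal entries nonzero; (3) show any kernel vector satisfies a nondegenerate three-term recurrence around the cycle, hence is a scalar multiple of $\bar x$, so $\operatorname{rank}\bar S = n-1$; (4) let $X'$ be any primal optimum, apply \Cref{thm:optcondition} to the pair $(X',\bar S)$ to get $X'\bar S = 0$, so every column of $X'$ lies in $\ker\bar S = \operatorname{span}(\bar x)$; (5) conclude $X' = c\,\bar x\bar x^T$, and the constraints $X'_{ii} = 1$ and $X' \succeq 0$ force $c = 1$, so $X' = \bar X$.

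The main obstacle I anticipate is step (3): making the corank-one claim fully rigorous uniformly over both cases of \Cref{thm:cyclerank1}, since the dual certificate $\bar S$ has a slightly different form in each case (in the second case some diagonal entry may be "tight" in a way that could conceivably create extra kernel directions). I would handle this by working directly from the recurrence structure of $\bar S$ rather than from the specific formula for $y$ — the only inputs needed are that $\bar S$ is positive semidefinite, symmetric, supported on the cycle, with nonzero edge entries, and annihilates a nowhere-zero vector $\bar x$; from "$\bar x$ nowhere zero" the recurrence is clearly nondegenerate (we are never dividing by a zero coordinate), which cleanly gives $\dim\ker\bar S = 1$.
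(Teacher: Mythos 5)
Your overall strategy (fix a dual optimal $\bar S$, show $\ker\bar S$ is one--dimensional, and use complementary slackness to pin down every primal optimum) is genuinely different from the paper's, which argues purely combinatorially that the optimal \emph{cut} is unique: in the first case of \Cref{thm:cyclerank1} the sign pattern making every $w_ix_ix_{i+1}$ negative is unique up to global sign, and in the second case the dominant weight $w_m$ is unique and forces the unique sign pattern with $w_mx_mx_{m+1}>0$. Your route would, if it worked, prove something stronger (uniqueness of \emph{all} optimal solutions, not just the rank~1 one), but step~(3) contains a genuine gap. The claim that $\rank\bar S=n-1$ is false in the boundary case of the theorem's second condition, which is included in the corollary's hypothesis. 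Concretely, take the triangle with $w_{12}=1$, $w_{13}=w_{23}=-2$, so that $1/|w_{12}|=1/|w_{13}|+1/|w_{23}|$ with equality. The optimal cut is $\bar x=(1,1,1)$, and the dual slack matrix is $\bar S=\tfrac14\,vv^T$ with $v=(1,1,-2)$: it is positive semidefinite, symmetric, supported on the cycle with all three off-diagonal entries nonzero, and annihilates the nowhere-zero vector $\bar x$ --- i.e.\ it satisfies every hypothesis you list as sufficient --- yet it has rank~$1$ and corank~$2$.

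The source of the error is the recurrence argument. For a \emph{path} (a genuinely tridiagonal matrix) each row with a nonzero super-diagonal entry does determine the next coordinate from one initial value, giving corank at most $1$. For a \emph{cycle}, every row is a three-term relation $\bar S_{i,i-1}v_{i-1}+\bar S_{ii}v_i+\bar S_{i,i+1}v_{i+1}=0$, so the solution space of the recurrence before imposing the wrap-around conditions is two-dimensional ($v_1$ and $v_2$ are both free), and the closure conditions need not cut it down to dimension~$1$ --- the example above shows they sometimes do not. In that example the corollary's conclusion still holds (one checks directly that $\pm(1,1,1)$ are the only $\pm1$ vectors orthogonal to $v$, and in fact the optimal face is still the single point $\bar x\bar x^T$), but establishing this requires an extra argument about which $\pm1$ vectors lie in $\ker\bar S$; once you carry that out you are essentially reproducing the paper's combinatorial uniqueness-of-the-cut argument, which bypasses the dual entirely and is the cleaner route here.
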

As an application of the above theorem, we calculate the probability that the \(K_3\) graph has a rank~1 primal solution in the following example.

\begin{example}
The probability that a a triangle, or \(K_3\), with weights randomly chosen from a standard distribution, has a rank~1 primal solution, is \(\frac{6-2\sqrt{3}}{3}\). When restricted to random positive weights, this probability is \(\frac{9-4\sqrt{3}}{3}\).

Suppose we choose a random vector of weights \((w_1,w_2,w_3)\in \RR^3\). Without loss of generality, we can normalize this vector so that \(w_1^2+w_2^2+w_3^2=1\). We claim that the condition of \Cref{thm:cyclerank1} is satisfied if and only if either \(w_1+w_2+w_3,w_1-w_2-w_3,-w_1+w_2-w_3,\) or \(-w_1-w_2+w_3\) is at most \(-1\). Indeed, note that if there are an even number of positively weighted edges, one of these \(4\) values has \(3\) negative terms, and thus as \(|x|>x^2\) for all \(x^2 \le 1\), it satisfies this condition. Otherwise, note that
\begin{align*}
    \frac{1}{|w_1|} \ge \frac{1}{|w_2|}+\frac{1}{|w_3|} &\Longleftrightarrow |w_2w_3| \ge |w_1w_2|+|w_1w_3| \\
    &\Longleftrightarrow (|w_1|-|w_2|-|w_3|)^2 \ge 1.
\end{align*}

Looking at each octant individually, we see that the resulting signs will define the four inequalities as described. Using calculus, it can be shown that the probability that one of the four inequalities is satisfied is \(p=\frac{1}{2}-\frac{\sqrt{3}}{6}\). As a result, the probability the solution to the primal max-cut SDP of the triangle with random weights is rank~1 is \(4p=\dfrac{6-2\sqrt{3}}{3} \approx 0.8453.\) Since \(0 < \rank \bar X \le 3-\rank \bar S \le 2\), then the remaining probability is the probability the primal is rank~2.

Note that when we randomly choose weights, we have a \(\frac{1}{2}\) chance of having an even number of positive weights, thus this total probability is the average of \(1\) and the probability for an odd number of positive weights. As a result, when we restrict our random weights to be all positive, the probability is
\[2\left(\frac{6-2\sqrt{3}}{3}\right)-1 = \frac{9-4\sqrt{3}}{3} \approx 0.6905.\]
\end{example}

\subsection{Max-cut SDP for a vertex sum}

We have characterized the conditions where cycle graphs are rank~1. We now characterize the probability distributions of primal solutions graphs that are the vertex sum of two graphs, in terms of the distributions of the subgraphs.

Say \(p_1\) and \(p_2\) be the probabilities that the rank of the solution to the primal max-cut SDP for \(K_3\) with arbitrary weights is 1 and 2, respectively.

According to the experimental values in \Cref{tab:probabilities}, the probability for arbitrary weights that the rank of the primal matrix is 1 is \(p_1^2,\) the probability it is rank~2 is \(2p_1p_2,\) and the probability it is rank~3 is \(p_2^2.\)

We obtain similar results for the Fish graph. Furthermore, equivalent results hold for the normally distributed nonnegative weights. 

Suppose we have two graphs \(G_1=(V_1,E_1),G_2=(V_2,E_2),\) where \(L(G_1)\) has primal-dual solution pair \(X_1,S_1\), and \(L(G_2)\) has primal-dual solution pair \(X_2, S_2\). Without loss of generality suppose we sum the graphs along the last element of \(V_1\) and the first of \(V_2\). Let \(x_1\) be the final column of \(X_1\), and as the final entry of \(x_1\) is \(1\), define \(y_1\) so that \(x_1=\begin{bmatrix} y_1 \\ 1
\end{bmatrix}\). Similarly, let \(x_2=\begin{bmatrix}
1 \\ y_2
\end{bmatrix}\) be the first column of \(X_2\). Finally, define \(Y_1,Y_2\) so that

\[X_1=\begin{bmatrix}
Y_1 & y_1 \\ y_1^T & 1
\end{bmatrix},
X_2=\begin{bmatrix}
1 & y_2^T \\ y_2 & Y_2
\end{bmatrix}.\]

The above observations regarding the probability distributions motivates the following theorem.

\begin{theorem}\label{thm:vertexsum}
The primal-dual pair

\[X'=\begin{bmatrix}
Y_1 & y_1 & y_1y_2^T \\
y_1^T & 1 & y_2^T \\
y_2y_1^T & y_2 & Y_2
\end{bmatrix}, \qquad 
S' = \begin{bmatrix}
S_1 & 0 \\ 0 & 0
\end{bmatrix} + \begin{bmatrix}
0 & 0 \\ 0 & S_2
\end{bmatrix}\]
is optimal.
Furthermore, \(\rank X'= \rank X_1 + \rank X_2 - 1\).
\end{theorem}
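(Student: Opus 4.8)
The plan is to verify the three conditions of \Cref{thm:optcondition}: that $X'$ is primal feasible, that $S'$ is dual feasible, and that complementary slackness $X'S' = 0$ holds. Then I will separately compute the rank of $X'$.

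First I would check primal feasibility of $X'$. The diagonal entries are the diagonals of $Y_1$, then the single entry $1$, then the diagonals of $Y_2$; since $X_1$ and $X_2$ are primal feasible these are all $1$. For positive semidefiniteness, the key observation is that $X'$ has the block form of a matrix built from $x_1 = \begin{bmatrix} y_1 \\ 1 \end{bmatrix}$ and $x_2 = \begin{bmatrix} 1 \\ y_2 \end{bmatrix}$: writing $X_1 = \widehat{X}_1 + \text{(something)}$ is awkward, so instead I would argue directly. Consider the ``glued'' factorization: if $X_1 = U_1 U_1^T$ and $X_2 = U_2 U_2^T$ where the last row of $U_1$ and the first row of $U_2$ are unit vectors, one can rotate $U_1$ and $U_2$ so that these two boundary rows coincide with a common unit vector $v$; then stacking the interior rows of $U_1$, the shared row $v$, and the interior rows of $U_2$ gives a matrix $U'$ with $U'(U')^T = X'$. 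The off-diagonal block $y_1 y_2^T$ emerges precisely because the interior rows of $U_1$ have inner products with $v$ equal to $y_1$, and similarly for $U_2$ and $y_2$. This simultaneously shows $X' \succeq 0$ and that $\rank X' = \rank U' \le \rank X_1 + \rank X_2 - 1$, since $U'$ has (after the rotation) its rows spanning the span of the interior rows of $U_1$ together with the span of $v$ and the interior rows of $U_2$, and these two spans share at least the line through $v$.

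Next, dual feasibility of $S'$ is almost immediate: $S'$ is a sum of two positive semidefinite matrices (padded with zeros), hence $S' \succeq 0$, and $S' = \Diag(y') - C'$ follows by checking that $C' = \frac14 L(F,w)$, the Laplacian of the vertex sum $F$, decomposes as the same padded sum $\begin{bmatrix} C_1 & 0 \\ 0 & 0\end{bmatrix} + \begin{bmatrix} 0 & 0 \\ 0 & C_2\end{bmatrix}$ — here using that the Laplacian of a vertex sum is the sum of the Laplacians (the degree of the shared vertex adds, and there are no new edges), together with the corresponding decomposition of the dual vector $y'$ obtained by adding the shared diagonal entries from $y_1$ and $y_2$. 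For complementary slackness I would multiply out $X' S'$ in block form. The product splits as $X'\begin{bmatrix} S_1 & 0 \\ 0 & 0\end{bmatrix} + X'\begin{bmatrix} 0 & 0 \\ 0 & S_2\end{bmatrix}$; the first term involves only the first $|V_1|$ columns of $X'$, which are exactly $\begin{bmatrix} Y_1 \\ y_1^T \\ y_2 y_1^T\end{bmatrix} = \begin{bmatrix} X_1 \\ y_2 y_1^T \end{bmatrix}$ times $S_1$, and $X_1 S_1 = 0$ by complementary slackness for the first SDP, while $y_2 y_1^T S_1 = y_2 (S_1 y_1)^T = 0$ since $S_1 y_1$ is the last column of $S_1 X_1 = 0$. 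The second term vanishes symmetrically. Finally, for the rank equality I must also establish the reverse inequality $\rank X' \ge \rank X_1 + \rank X_2 - 1$; this I would get by exhibiting $X_1$ and $X_2$ (up to deleting a redundant row/column) as principal submatrices or images of $X'$ under coordinate projections, so that $\rank X' \ge \rank X_1$ and $\rank X' \ge \rank X_2$, and then using the interior-block structure to argue the ranks of the two ``halves'' of $X'$ overlap in exactly a one-dimensional space coming from the shared vertex.

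The main obstacle I anticipate is the rank computation, specifically the lower bound $\rank X' \ge \rank X_1 + \rank X_2 - 1$. The upper bound and all three optimality conditions are essentially bookkeeping with block matrices, but pinning down that the column spaces of the ``$G_1$ part'' and the ``$G_2$ part'' of $X'$ meet in a space of dimension exactly one — no more, no less — requires care: one must rule out accidental extra linear dependencies between the interior rows coming from $U_1$ and those coming from $U_2$, which holds because after the rotation the only constraint linking the two blocks is the shared row $v$. Making this precise, likely via the explicit factorization $X' = U'(U')^T$ and a dimension count on $\mathrm{row}(U')$, is where the real work lies.
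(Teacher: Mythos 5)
Your proposal is correct and follows essentially the same route as the paper: the paper likewise verifies the three conditions of \Cref{thm:optcondition}, proves complementary slackness by the same block multiplication using \(X_1S_1=X_2S_2=0\), and establishes both positive semidefiniteness and the exact rank via an explicit glued Gram factorization \(X'=M^TM\) with \(M=\begin{bmatrix} V & vy_2^T\\ 0& V_2\end{bmatrix}\), \(V^TV=X_1\), \(V_2^TV_2=Y_2-y_2y_2^T\) --- which is precisely your ``rotate and glue'' construction written in a concrete frame, and from whose block-triangular structure the full rank identity \(\rank X'=\rank X_1+\rank(Y_2-y_2y_2^T)=\rank X_1+\rank X_2-1\) (both the upper and the lower bound you worried about) follows together with the Schur-complement rank formula. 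The one point where you are more careful than the paper is in explicitly checking that the Laplacian of the vertex sum decomposes as the padded sum of the two Laplacians, which the paper leaves implicit.
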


A direct consequence of this result is that we now have well-defined solutions to the primal and dual max-cut SDPs of two graphs summed at a vertex. The following corollary shows that in fact the max-cut problem for a vertex sum graph also has well-defined ranks for its solutions.

\begin{corollary}\label{cor:vtxsumranks}
If \(G'\) is a vertex sum of \(G_1\) and \(G_2\) along an element of \(V_1\) and an element of \(V_2\), then there exists a primal solution to \(L(G')\) with rank at most \(\max(\rank(X_1), \rank(X_2)).\)
\end{corollary}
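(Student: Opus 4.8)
The plan is to prove \Cref{cor:vtxsumranks} by a direct geometric vector-configuration argument rather than by manipulating the explicit matrix \(X'\) of \Cref{thm:vertexsum}, whose rank \(\rank X_1 + \rank X_2 - 1\) is in general strictly larger than \(\max(\rank X_1,\rank X_2)\). The underlying point is that the optimal face of the primal SDP for \(G'\) contains solutions of smaller rank than \(X'\), and we exhibit one of them explicitly.

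First I would pass to Gram representations. Write \(r_1=\rank X_1\) and \(r_2=\rank X_2\), and assume without loss of generality that \(r_1\ge r_2\). Since \(X_1\succeq 0\) has unit diagonal, it is the Gram matrix of unit vectors \(u_1,\dots,u_{n_1}\in\RR^{r_1}\), where \(u_{n_1}\) corresponds to the shared vertex \(v\); likewise \(X_2\) is the Gram matrix of unit vectors \(z_1,\dots,z_{n_2}\in\RR^{r_2}\), with \(z_1\) the shared vertex. Embedding \(\RR^{r_2}\hookrightarrow\RR^{r_1}\) in the obvious way, all the \(z_j\) may be regarded as vectors in \(\RR^{r_1}\). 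Both \(u_{n_1}\) and \(z_1\) are then unit vectors of \(\RR^{r_1}\), so there is an orthogonal map \(R\in O(r_1)\) with \(Rz_1=u_{n_1}\). Replacing each \(z_j\) by \(Rz_j\) leaves the Gram matrix \(X_2\) unchanged, since orthogonal maps preserve inner products, while forcing the two copies of the shared vertex to coincide. Now take the single configuration \(u_1,\dots,u_{n_1},\,Rz_2,\dots,Rz_{n_2}\), all unit vectors in \(\RR^{r_1}\), and let \(\bar X\) be its Gram matrix. By construction \(\bar X\succeq 0\) has unit diagonal, its principal submatrix on the \(G_1\)-vertices equals \(X_1\), its principal submatrix on the \(G_2\)-vertices equals \(X_2\), and \(\rank \bar X\le r_1=\max(r_1,r_2)\).

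It then remains to verify that \(\bar X\) is optimal, not merely feasible. Because every edge of \(G'\) lies in \(G_1\) or in \(G_2\), the only entries of \(L(G')\) paired against \(\bar X\) in the objective \(L(G')\bullet\bar X\) are edge entries and diagonal entries, with the shared-vertex degree splitting as \(L(G_1)_{vv}+L(G_2)_{vv}\); consequently \(L(G')\bullet\bar X = L(G_1)\bullet X_1 + L(G_2)\bullet X_2\), the cross-block between \(G_1\setminus\{v\}\) and \(G_2\setminus\{v\}\) never being indexed by an edge. The matrix \(X'\) of \Cref{thm:vertexsum} likewise restricts to \(X_1\) and \(X_2\) on the two subgraphs, so it carries the identical objective value; since \(X'\) is optimal, \(\bar X\) is optimal as well. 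This produces a primal optimal solution for \(L(G')\) of rank at most \(\max(\rank X_1,\rank X_2)\), as claimed.

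The main obstacle is the optimality check, not the construction: one must confirm that the unconstrained cross-block \(\langle u_i, Rz_j\rangle\) introduced by merging the two configurations does not perturb the objective. This is precisely the observation that the cost matrix \(C=\tfrac14 L(G')\) is supported on the diagonal and on edges, all of which are internal to \(G_1\) or to \(G_2\). Once this edge-support decomposition is established, comparison against the already-optimal \(X'\) of \Cref{thm:vertexsum} upgrades feasibility to optimality immediately, and the rank bound is automatic from the ambient dimension \(\max(r_1,r_2)\).
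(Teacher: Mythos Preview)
Your argument is correct and follows the same skeleton as the paper's proof: both exploit that the cross-block between \(G_1\setminus\{v\}\) and \(G_2\setminus\{v\}\) is not indexed by any edge of \(G'\), so it contributes nothing to the objective and may be replaced freely subject only to positive semidefiniteness. Where you diverge is in how the replacement is carried out. The paper simply asserts that ``it is known'' one can complete the remaining block to a positive semidefinite matrix of rank \(\max(\rank X_1,\rank X_2)\) and leaves it at that; you instead build the completion explicitly by writing \(X_1,X_2\) as Gram matrices, embedding the lower-dimensional configuration into \(\RR^{\max(r_1,r_2)}\), and rotating so the shared vertex agrees. This is more self-contained and more elementary than invoking a matrix-completion result as a black box, and it makes the rank bound transparent (it is just the ambient dimension). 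The paper's version is terser but depends on an external fact it does not prove; your version stands on its own.
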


The proof of the above theorem and corollary is presented in \Cref{sec:proofs}.
To illustrate how this theorem helps us further determine the probabilities, we provide the following example.

\begin{example}
The probabilities for the butterfly graph primal SDP returning a solution of rank~1,~2,~and~3 are in fact \(p_1^2,2p_1p_2,\) and \(p_2^2\), respectively, where \(p_1=\dfrac{6-2\sqrt{3}}{3}\) and \(p_2=1-p_1=\dfrac{2\sqrt{3}-3}{3}\).

Note that the best known algorithms return the highest rank solutions. As a result, a rank~1 solution will be returned if and only if the two summed \(K_3\) graphs are both yield rank \(1\) primal solutions, a rank~3 solution if and only if both yield rank \(2\) solutions, and a rank~2 solution otherwise. This gives us the probabilities desired.
\end{example}

\subsection{Max-cut SDP for a diamond graph}

In addition to our study of vertex sums, we also studied edge sums. In particular, we focused on the diamond graph, which is the edge sum of two \(K_3\) graphs.

Suppose that we have a \textit{diamond} graph with \(V=\{1,2,3,4\}\) and \linebreak \(E=\{(1,2), (1,3), (2,3), (2,4), (3,4)\}\). Let \(X^*,S^*\) be a primal-dual solution pair to the max-cut SDP for \(G\). Let \(G_1\) and \(G_2\) be the induced subgraphs of \(G\) on vertex sets \(\{1,2,3\}\) and \(\{2,3,4\}\) respectively with same weight vector as \(G\).

\begin{center}
    \includegraphics[width=3in]{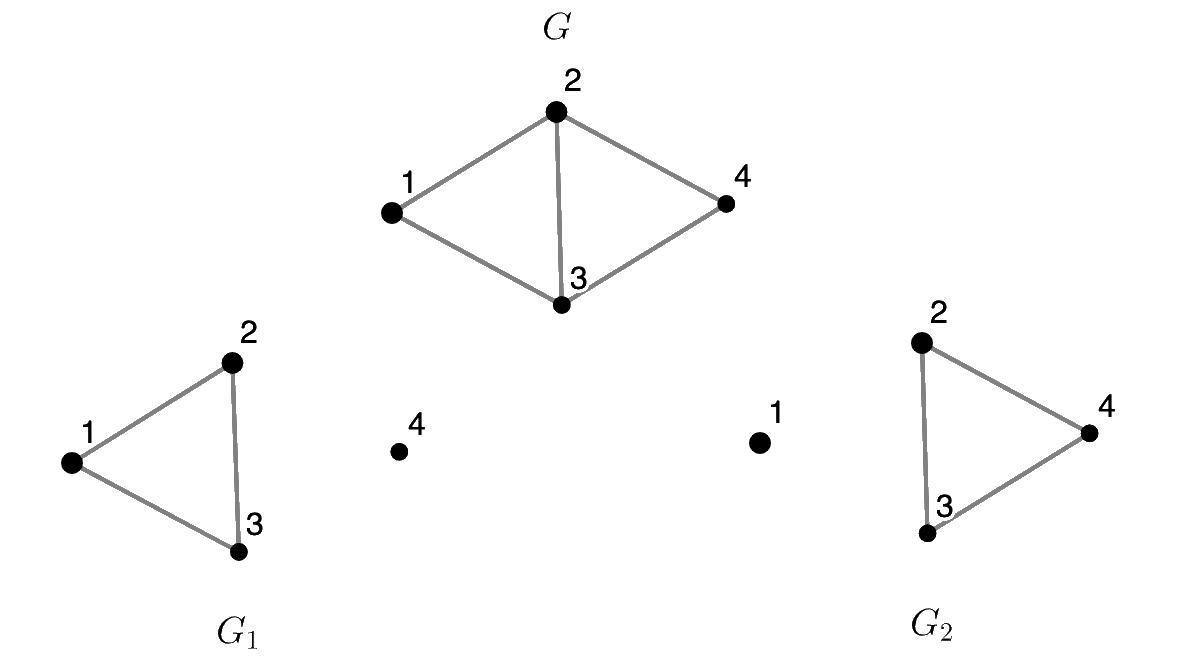}
\end{center}

We prove the following two theorems about diamond graphs.

\begin{theorem}\label{thm:diamond1}
Assume that \(G_1\) and \(G_2\) both yield rank~1 optimal primal solutions which agree on the joined edge between vertices \(2\) and \(3\). Then the following are equivalent:
\begin{itemize}
    \item \(w_{23} \le \min(\frac{1}{1/w_{12}+1/w_{13}},\frac{1}{1/w_{24}+1/w_{34}}).\)
    \item The matrix \(X\), where \(x=[-1,1,1,-1]\) and \(X=xx^T\), is an optimal primal solution.
\end{itemize}
Also, the above items imply that the optimal dual solution \(S^*\) is of the form
    \[S^*=\begin{bmatrix}
    S_1 & 0 \\
    0 & 0
    \end{bmatrix}+
    \begin{bmatrix}
    0 & 0 \\
    0 & S_2
     \end{bmatrix}+\frac14
    \begin{bmatrix}
    0 & 0 & 0 & 0 \\
    0 & -w_1 & w_1 & 0 \\
    0 & w_1 & -w_1 & 0 \\
    0 & 0 & 0 & 0
    \end{bmatrix}.\]
\end{theorem}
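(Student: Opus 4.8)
The plan is to verify the three optimality conditions of \Cref{thm:optcondition} for the candidate primal-dual pair, using the hypotheses to pick the right dual certificate. Start with the primal side: the matrix $X = xx^T$ with $x = [-1,1,1,-1]$ is manifestly rank~1, positive semidefinite, and has unit diagonal, so it is primal feasible; the only content on the primal side is checking that it is \emph{optimal}, which will follow once a matching dual solution is produced. So the real work is constructing $S^*$ and verifying dual feasibility and complementary slackness $X S^* = 0$.

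First I would write down explicitly the two rank-$1$ sub-solutions. Since $G_1$ (on $\{1,2,3\}$, a triangle with weights $w_{12},w_{13},w_{23}$) has a rank~$1$ optimal solution, \Cref{thm:cyclerank1} applied to $C_3$ tells us precisely when: either an even number of the three weights is positive, or some weight dominates in the reciprocal sense. The hypothesis $w_{23} \le \tfrac{1}{1/w_{12}+1/w_{13}}$ is exactly the statement that (after sign bookkeeping) the edge $23$ is the dominating edge for the triangle $G_1$ — equivalently $1/|w_{23}| \ge 1/|w_{12}| + 1/|w_{13}|$ — so the rank~$1$ solution of $G_1$ is the cut that separates $\{2,3\}$, i.e.\ the vector $(-1,1,1)$ up to global sign, which is consistent with the claim that $G_1$ and $G_2$ "agree on the joined edge." Then I would invoke strong duality for $G_1$ to get a dual optimal $S_1 \in \SS^3$ with $X_1 S_1 = 0$ and $S_1 = \Diag(y^{(1)}) - \tfrac14 L(G_1,w)$, and similarly $S_2$ for $G_2$. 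The proposed $S^*$ is the "block overlay" of $S_1$ and $S_2$ plus a correction term supported on the $\{2,3\}$ block: $\tfrac14\begin{bmatrix} -w_1 & w_1 \\ w_1 & -w_1\end{bmatrix}$ in rows/columns $2,3$. Here I would need to track carefully what $w_1$ denotes — presumably $w_{23}$, the weight of the summed edge — and the point of the correction term is that when you overlay the two Laplacians $L(G_1,w)$ and $L(G_2,w)$, the edge $23$ gets counted twice, so $\Diag(y_1) \oplus \Diag(y_2) - \tfrac14(L(G_1) \oplus L(G_2))$ overcounts, and the $\pm w_1$ term is exactly $\tfrac14$ times the Laplacian of the single edge $23$, restoring $S^* = \Diag(y^*) - \tfrac14 L(G,w)$ for an appropriate $y^*$. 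So dual feasibility of the linear constraint is a bookkeeping identity; the substantive part is $S^* \succeq 0$.

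For positive semidefiniteness of $S^*$ I would argue as follows. Both $S_1$ and $S_2$ are PSD and rank at most $2$ (since the primals are rank~$1$ and strict complementarity / the rank bound gives $\rank S_i = n_i - 1 = 2$), so each has a one-dimensional kernel spanned by the corresponding cut vector $(-1,1,1)$ and $(1,1,-1)$. The overlay $\begin{bmatrix} S_1 & 0 \\ 0 & 0\end{bmatrix} + \begin{bmatrix} 0 & 0 \\ 0 & S_2\end{bmatrix}$ is PSD as a sum of two PSD matrices; adding the correction term $\tfrac14\,\ell\ell^T$ where... actually note $\begin{bmatrix} -w_1 & w_1 \\ w_1 & -w_1\end{bmatrix} = -w_1\begin{bmatrix}1 & -1 \\ -1 & 1\end{bmatrix}$, which is PSD iff $w_1 \le 0$ and negative semidefinite iff $w_1 \ge 0$; so the correction is \emph{not} obviously sign-definite, and I expect this to be the main obstacle. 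The resolution should be that the correction term lies "inside" the kernel structure: the vector $v = e_2 - e_3$ (supported on the $\{2,3\}$ block) is such that $(e_2 - e_3)$ restricted to $G_1$'s coordinates is annihilated or nearly annihilated by $S_1$ — no, rather $S_1(-1,1,1)^T = 0$, so $S_1$ does not annihilate $e_2 - e_3$ directly. The right move is to compute $x^T S^* x$ and more generally to show the Schur-complement / eigenvalue condition directly using the explicit $2\times 2$ or $3\times 3$ blocks, exploiting that the hypothesis $w_{23} \le \tfrac{1}{1/w_{12}+1/w_{13}}$ gives a quantitative inequality among the $y^{(i)}$ entries that exactly compensates the indefinite correction. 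Concretely I would verify $S^* \succeq 0$ by checking complementary slackness first — showing $X S^* = xx^T S^* = 0$, i.e.\ $S^* x = 0$ where $x = [-1,1,1,-1]^T$ — since $S^* x = 0$ combined with the fact that the $\{1,2,3\}$ and $\{2,3,4\}$ principal submatrices of $S^*$ are PSD (they are $S_1$ and $S_2$ plus a rank-one piece) will, via a rank/interlacing argument, force $S^* \succeq 0$ and simultaneously dispatch complementary slackness. Then by \Cref{thm:optcondition} both $X$ and $S^*$ are optimal, proving the backward implication and the "furthermore" clause at once.

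For the forward implication — that $X = xx^T$ being an optimal primal solution forces the weight inequality — I would run the argument in reverse: if the inequality fails for, say, the triangle $G_1$, then by \Cref{thm:cyclerank1} the rank~$1$ solution of $G_1$ (if it exists at all) is a \emph{different} cut, inconsistent with the restriction of $x$ to $\{1,2,3\}$ being optimal for $G_1$; but optimality of $X$ for $G$ implies optimality of its principal submatrix for the induced subgraph $G_1$ (this monotonicity needs a short justification via the dual decomposition, or directly: restricting a dual certificate for $G$ to a vertex subset gives a dual certificate for the induced subgraph after redistributing the cross-edge slack), giving a contradiction. The cleanest packaging is probably to show that $X = xx^T$ optimal $\iff$ there exists a dual $S$ of the block-overlay form $\iff$ each block $S_i$ is a valid dual certificate for $G_i$ with the shared-edge constraint, and then read off the inequality from \Cref{thm:cyclerank1} applied to each triangle. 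I expect the monotonicity-under-induced-subgraphs step and the sign analysis of the correction term to be the two places where care is needed; everything else is Laplacian bookkeeping.
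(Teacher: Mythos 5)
Your overall strategy --- certify optimality of $X=xx^T$ by exhibiting the dual matrix $S^*$ and invoking \Cref{thm:optcondition} --- is a legitimate route, and it is genuinely different from the paper's. The paper never proves $S^*\succeq 0$ directly: it splits the Laplacian of the diamond into the Laplacians of two zero-padded triangles by dividing the shared weight $w_{23}$ between them, applies \Cref{thm:cyclerank1} and \Cref{cor:uniquecycsol} to conclude that $x=(-1,1,1,-1)$ is simultaneously optimal for both summands and hence for their sum, and only afterwards identifies $S^*$ as the unique matrix agreeing with $-\tfrac14 L(G,w)$ off the diagonal and satisfying $S^*x=0$; its positive semidefiniteness then comes for free because a dual optimum is already known to exist. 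Your plan runs in the opposite direction and therefore must carry the burden the paper's ordering avoids.

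That burden is exactly where your proposal has a genuine gap: you never establish $S^*\succeq 0$. You correctly observe that the correction term $\tfrac14\bigl(\begin{smallmatrix}-w_1 & w_1\\ w_1 & -w_1\end{smallmatrix}\bigr)$ is negative semidefinite for $w_1>0$, so $S^*$ is not a sum of PSD matrices, but the proposed remedies do not close the hole. The claim that the $\{1,2,3\}$ and $\{2,3,4\}$ principal submatrices of $S^*$ are ``$S_1$ and $S_2$ plus a rank-one piece'' and hence PSD is not correct as stated: the $\{1,2,3\}$ submatrix is $S_1$ plus the (PSD) $\{2,3\}$-block of $S_2$ plus the NSD correction, and its positive semidefiniteness is precisely where the hypothesis $w_{23}\le \frac{1}{1/w_{12}+1/w_{13}}$ must enter quantitatively --- it is not a formal consequence of $S_1,S_2\succeq 0$. (The reduction step itself can be salvaged: if $S^*x=0$ with $x_4\ne 0$, any $y$ decomposes as $\alpha x+z$ with $z_4=0$, so $y^TS^*y=z^TS^*z$ depends only on the $\{1,2,3\}$ principal block; but you still owe a proof that that $3\times3$ block is PSD, and interlacing plus $\det S^*=0$ alone only excludes two negative eigenvalues, not one.) Separately, your forward direction leans on ``optimality of $X$ for $G$ implies optimality of its restriction to the induced subgraph $G_1$,'' which is false for general weighted graphs and must here be extracted from the standing hypothesis that $G_1$ and $G_2$ have rank-one optima agreeing on the edge $23$ (this is how the paper argues, via \Cref{cor:uniquecycsol}); as written it is an unproved assertion. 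To complete the proof along your own lines you would need to write $S_1,S_2$ explicitly (their diagonals are forced by $S_ix_i=0$) and verify the resulting $3\times3$ determinant inequality by hand, or else adopt the paper's weight-splitting device, which reduces everything to \Cref{thm:cyclerank1}.
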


\begin{theorem}\label{thm:diamond2}
Assume that \(G_1\) and \(G_2\) both yield rank~1 optimal primal solutions which agree on the joined edge between vertices \(2\) and \(3\). Then the following are equivalent:
\begin{itemize}
    \item \(w_{23} \ge \frac{1}{|1/w_{12}-1/w_{13}|}+\frac{1}{|1/w_{24}-1/w_{34}|}.\)
    \item The matrix \(X\), where \(x=[\epsilon_1,-1,1, \epsilon_2]\) and \(X=xx^T\), is an optimal primal solution for some \(\epsilon_1,\epsilon_2 \in \{-1,1\}\).
\end{itemize}
The above items imply that the optimal dual solution \(S^*\) is of the form
    \[S^*=\begin{bmatrix}
    S_1 & 0 \\
    0 & 0
    \end{bmatrix}+
    \begin{bmatrix}
    0 & 0 \\
    0 & S_2
    \end{bmatrix}+\frac14
    \begin{bmatrix}
    0 & 0 & 0 & 0 \\
    0 & w_1 & w_1 & 0 \\
    0 & w_1 & w_1 & 0 \\
    0 & 0 & 0 & 0
    \end{bmatrix}.\]
\end{theorem}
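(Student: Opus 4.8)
The plan is to verify optimality of the proposed primal-dual pair using the characterization in \Cref{thm:optcondition}: it suffices to exhibit a dual feasible $S^*$ of the stated block form, check that $X = xx^T$ with $x = [\epsilon_1,-1,1,\epsilon_2]$ is primal feasible (which is immediate since all diagonal entries of a $\pm1$ rank-one matrix are $1$), and verify complementary slackness $X S^* = 0$. The equivalence will then follow by showing that the inequality $w_{23} \ge \frac{1}{|1/w_{12}-1/w_{13}|} + \frac{1}{|1/w_{24}-1/w_{34}|}$ is exactly the condition under which the free parameter in $S^*$ (the value $w_1$ appearing in the rank-one correction block on the $\{2,3\}$ coordinates) can be chosen to make $S^* \succeq 0$.

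First I would set up notation: since $G_1$ on $\{1,2,3\}$ has a rank-one optimal primal solution, by \Cref{thm:cyclerank1} applied to the triangle (or by direct analysis) this solution corresponds to a cut vector, and the hypothesis that $G_1,G_2$ agree on the edge $(2,3)$ fixes the relative sign of coordinates $2$ and $3$; the case $w_{23}\ge\frac{1}{|1/w_{12}-1/w_{13}|}+\dots$ forces this relative sign to be $-1$ (coordinates $2$ and $3$ on opposite sides), which is why $x_2 = -1, x_3 = 1$. The signs $\epsilon_1,\epsilon_2$ for vertices $1$ and $4$ are then determined by $G_1$ and $G_2$ individually. Next I would write $S^* = S_1^{\oplus} + S_2^{\oplus} + \frac14 t\, E_{23}$, where $E_{23}$ is the matrix with a $2\times2$ all-ones block in the $\{2,3\}$ coordinates and zeros elsewhere, $S_i^{\oplus}$ is the padding of the dual-optimal $S_i$ for $G_i$, and $t$ is a scalar to be chosen (the theorem claims $t = w_1$ works; I would keep it symbolic). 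The dual feasibility constraint $S^* = \Diag(y) - \frac14 L(G,w)$ pins down the off-diagonal entries and forces a relation on $t$: the $(2,3)$ entry of $S^*$ must be $\frac14 w_{23}$ minus whatever $S_1,S_2$ contribute there, and reconciling this across the two triangles is where the $|1/w_{12}-1/w_{13}|$ and $|1/w_{24}-1/w_{34}|$ terms enter, since the dual solution of a weighted triangle with a cut separating two specified vertices has an explicit closed form in terms of reciprocals of weights.

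The main technical content — and the step I expect to be the chief obstacle — is showing $S^* \succeq 0$. Because $S^*$ is $4\times4$ with a known $2$-dimensional kernel (spanned by $x$ and, I expect, one more vector coming from complementary slackness with the rank-one $X$), checking positive semidefiniteness reduces to checking nonnegativity of a $2\times2$ (or scalar) Schur complement. I would compute this Schur complement explicitly: it should come out to a quantity that is nonnegative precisely when $w_{23} \ge \frac{1}{|1/w_{12}-1/w_{13}|}+\frac{1}{|1/w_{24}-1/w_{34}|}$, closing the loop between the algebraic inequality and the geometric statement. The reverse implication (primal solution of the stated form $\Rightarrow$ inequality) I would get by contraposition: if the inequality fails, then no valid dual certificate of this block form exists with $X S^* = 0$, and since complementary slackness is necessary for optimality (\Cref{thm:optcondition}), $X = xx^T$ with that sign pattern cannot be optimal — alternatively, one exhibits a strictly better primal point (e.g. a rank-two perturbation rotating coordinates $2$ and $3$ slightly) when $w_{23}$ is too small relative to the triangle weights, mirroring the analysis behind \Cref{thm:cyclerank1}. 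Finally I would record the resulting $S^*$ in the displayed block form, noting that the $\begin{bmatrix} w_1 & w_1 \\ w_1 & w_1\end{bmatrix}$ correction (all entries $+w_1$, not $\pm w_1$ as in \Cref{thm:diamond1}) reflects the opposite-sign cut on the joined edge.
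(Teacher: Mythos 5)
Your forward direction (optimality of $xx^T$ implies the inequality) is essentially the paper's argument: once $x$ is fixed, the off-diagonal entries of a dual feasible $S$ are pinned down by feasibility and the diagonal is pinned down by complementary slackness $Sx=0$, so the dual candidate is unique; the paper (via \Cref{lem:rankone}) writes this candidate as $L(G,\overline{w})$ with $\overline{w}_{ij}=-x_ix_jw_{ij}$ and extracts the inequality from the nonnegativity of the determinant of a $3\times 3$ principal submatrix, which is morally the same minor/Schur-complement computation you describe. Your contraposition for ``inequality fails $\Rightarrow$ not optimal'' is valid for the same reason: there is no other candidate certificate to fall back on.

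The divergence, and the genuine gap, is in the sufficiency direction. You propose to verify $S^*\succeq 0$ directly and reduce this to a single scalar Schur complement on the grounds that $S^*$ has a two-dimensional kernel ``coming from complementary slackness with the rank-one $X$.'' That premise is unjustified: complementary slackness with $X=xx^T$ forces only $S^*x=0$, a one-dimensional constraint, so positive semidefiniteness of the $4\times 4$ candidate still amounts to a $3\times 3$ condition (several principal minors), and you have not shown that all of them follow from the single inequality on $w_{23}$ --- the key computation is asserted (``it should come out to'') rather than done. The paper sidesteps this entirely with an idea your proposal is missing: it splits the shared weight as $w_{23}=p+q$ with $p\ge \frac{1}{1/w_{12}-1/w_{13}}$ and $q\ge \frac{1}{1/w_{24}-1/w_{34}}$, so that $L(G,w)$ decomposes as a sum of Laplacians of two triangles each satisfying the hypothesis of \Cref{thm:cyclerank1}; since $X$ is optimal for each summand's SDP separately, it maximizes the sum of the two objectives and is therefore optimal for $G$, with no $4\times 4$ positive semidefiniteness check needed. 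To salvage your route you would have to carry out the full verification that the assembled $S^*$ is positive semidefinite under the stated inequality, not just one minor.
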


\section{Proofs}\label{sec:proofs}

\subsection{General Lemma on Rank~1 optimal solutions}
From \Cref{thm:optcondition}, we can obtain that if \(x\) is the ones vector $(1, \ldots, 1)\in \RR^n,$ then \(\bar{S}x=0 \) if and only if \(\bar{S}=-\frac{1}{4}L.\) Therefore, the ones matrix \(xx^T\) is an optimal solution to the primal max-cut SDP if and only if and only if \(\bar{S} \succeq 0\) when \(\bar{S}x=0\), which is equivalent to \(-L(G,w)\succeq 0.\)

We can generalize for any vector \(x=(\pm 1, \ldots, \pm 1)\in \RR^n.\) This result will be helpful for the proofs in following subsections.
\begin{lemma}\label{lem:rankone}
Suppose a graph \(G\) has weight vector \(w\) and the primal SDP of the matrix \(L(G,w)\) has a rank~1 optimal solution. There there exists a vector \(x\) with \(x_i^2=1\) for all \(i,\) such that for  edge weights \(\overline{w}\) defined with \(\overline{w_{ij}}=-x_ix_jw_{ij} ~ \forall ~ i,j \in [n]\), the matrix \(S'=L(G,\overline{w})\) is positive semi-definite.
\end{lemma}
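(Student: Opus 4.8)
The plan is to reduce the rank~1 case to the ones-vector case treated in the paragraph preceding the lemma. Suppose $\bar X$ is a rank~1 optimal primal solution. Since $\bar X \succeq 0$ has rank~1 and all diagonal entries equal to $1$, we may write $\bar X = x x^T$ for some vector $x$ with $x_i^2 = 1$ for all $i$; this is exactly the observation recorded in \Cref{sec:background}. Fix this sign vector $x$, and let $D = \Diag(x)$, so that $D$ is its own inverse and $\bar X = D (\mathbf{1}\mathbf{1}^T) D$ where $\mathbf{1}$ is the all-ones vector.

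Next I would pass to the conjugated SDP. Let $\bar S$ be an optimal dual solution, which exists by strong duality, and set $S' = D \bar S D$. The idea is that conjugation by the diagonal sign matrix $D$ is a symmetry of the max-cut SDP: it preserves positive semidefiniteness ($S' \succeq 0 \iff \bar S \succeq 0$, since $D$ is orthogonal), it preserves the diagonal (so it sends feasible points to feasible points), and it sends the cost matrix $C = \tfrac14 L(G,w)$ to $\tfrac14 D L(G,w) D = \tfrac14 L(G,\overline w)$ with $\overline{w_{ij}} = -x_i x_j w_{ij}$, because the off-diagonal entry $-w_{ij}$ picks up the factor $x_i x_j$ while the diagonal entries of a Laplacian are unchanged under this sign flip together with the corresponding edge-sign flips — one should check that $\sum_k \overline{w_{ik}}$ need not equal $\sum_k w_{ik}$, so strictly speaking $L(G,\overline w)$ is the Laplacian of the reweighted graph, and $D L(G,w) D$ differs from it only on the diagonal; I would phrase the conclusion directly in terms of $D L(G,w) D$ and then note this equals $L(G,\overline w)$ exactly in the statement's notation. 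Complementary slackness $\bar X \bar S = 0$ becomes $(\mathbf{1}\mathbf{1}^T)(D\bar S D) = 0$, i.e. $S' \mathbf{1} = 0$.

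Now I invoke the ones-vector analysis: by \Cref{thm:optcondition} applied to the reweighted graph $G$ with weights $\overline w$, the pair $(\mathbf{1}\mathbf{1}^T, S')$ is primal-dual optimal for the max-cut SDP of $L(G,\overline w)$ — primal feasibility of $\mathbf{1}\mathbf{1}^T$ is immediate, dual feasibility holds since $S' \succeq 0$ and $S' = \Diag(y') - \tfrac14 L(G,\overline w)$ for the appropriate $y'$ (the diagonal of $D\bar S D$), and complementary slackness is $S'\mathbf{1}=0$ shown above. The displayed paragraph before the lemma then gives $S' = -\tfrac14 L(G,\overline w)$ and hence $-L(G,\overline w) \succeq 0$. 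To match the exact wording of the lemma (which asserts $S' = L(G,\overline w) \succeq 0$ rather than $-L(G,\overline w)\succeq 0$), I would track the sign convention carefully: with $\overline{w_{ij}} = -x_i x_j w_{ij}$ the claim that $L(G,\overline w)\succeq 0$ is the correct statement, since for the all-ones cut to be optimal we need $-L \succeq 0$ in the original weights, which under the sign flip $w \mapsto \overline w$ becomes $+L(G,\overline w)\succeq 0$; I expect this bookkeeping — keeping straight which of $L$ and $-L$ is PSD after the sign twist — to be the only real subtlety, and it is routine once the conjugation-by-$D$ symmetry is set up cleanly.
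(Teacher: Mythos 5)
Your proposal is correct and is essentially the paper's own argument: the paper also conjugates the optimal dual matrix by $\Diag(x)$ (it writes this entrywise as $S'_{ij}=4x_ix_j\bar S_{ij}$), uses complementary slackness $\bar S x=0$ to show the row sums of $S'$ vanish so that $S'$ is exactly the Laplacian of the reweighted graph, and uses $\bar S\succeq 0$ via the substitution $z=\Diag(x)y$ to conclude $S'\succeq 0$. The sign bookkeeping you flag does work out exactly as you anticipate, so there is no gap.
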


\begin{proof}
Suppose the primal SDP returns a solution \((\bar X,\bar S)\) with \(\bar X\) rank~1. By definition, we know \(\bar X=xx^T\) for some vector \(x\) with \(x_i=\pm 1\) for all \(i\). Since \(x\) is the first column of \(X\), then \(Sx=0\). Define \(S'\) with \(S'_{ij}=4x_ix_j \bar S_{ij}\). Then for all edges \(i \in E(G)j\), we just have \(S'_{ij}=x_ix_j \cdot 4S_{ij}=-x_ix_jw_{ij}\). It is evident that \(S'_{ij}=0\) for \(ij\) not connected. Let \(\bar S_i\) be the \(i\)th column of \(\bar S\), and note by definition \(x^TS_i=0\). Now,
\[\sum_{j=1}^n S'_{ij}= \sum_{j=1}^n 4x_ix_j \bar S_{ij}=4x_i(x^T \bar S_i)=0.\]
Thus, \(S'\) is indeed the Laplacian matrix with the weights as claimed. Finally, note that for any vector \(y\), we have
\[y^T S' y = \sum_{1 \le i,j \le n} 4y_iy_jx_ix_j\bar S_{ij}.\]
We set a vector \(z\) with \(z_i=x_iy_i.\) Because \( \bar S \succeq 0,\) we have \(z^T \bar S z \ge 0,\) and thus \(y^T S' y \ge 0\). As a result, \(S'\) is positive semidefinite.
\end{proof}

This proof tells us if \(\bar X\) is rank~1 and is an optimal solution to the primal max-cut SDP for some graph, then some transformation of \(L\) is positive semi-definite. More practically, the set of constraints on \(L\) to have a rank~1 optimal primal solution is orthogonal to the condition for which the all ones matrix is an optimal primal solution.

\subsection{Rank~1 solutions for cycles}

\begin{proof}[Proof of \Cref{thm:cyclerank1}]
By \Cref{lem:rankone}, we know that a rank~1 solution will lead to the exact optimal cut \(x\) with \(x_i^2=1\) for all \(i\). For convenience let \(x_{n+1}=x_1\). Suppose that \(w_kx_kx_{k+1}\) is positive for at least two different values of \(k\), say \(k_1<k_2\). Then consider the vector \(x'\) with \(x'_k=-x_k\) for all \(k_1 < k \le k_2\) and \(x'_k=x_k\) otherwise. It is clear that the total sum of \(w_kx_kx_{k+1}\) will decrease, thus leading to a better primal solution. As a result, \(w_ix_ix_{i+1}\) is positive for at most one value of \(i\).

If \(-w_ix_ix_{i+1} \ge 0\) for all values of \(i\), then we can take the product of all of these constraints to get
\[\prod_{i=1}^n -w_i = \prod_{i=1}^n -w_ix_ix_{i+1} \ge 0.\]
Thus there are an even number of positive \(w_i\) in this case. In fact, the converse is true: if there are an even number of positive \(w_i\), it is always possible to choose an \(x\) so that \(w_ix_ix_{i+1}\) is negative for all \(i\).

Now, without loss of generality suppose that \(x=(1, \ldots, 1)\in \RR^n,\) so that the matrix \(S'=-L(G,w)\) is positive semidefinite by \Cref{lem:rankone}. \(S' \succeq 0\) if and only if the determinant of each principal submatrix is positive. Note that \(S'\) has the form
\[
\begin{bmatrix}
-(w_n+w_1) & w_1 & \dots & 0 & w_n\\
w_1 & w_1+w_2 & \dots & 0 & 0\\
\vdots & \vdots & \ddots & \vdots & \vdots \\
0 & 0 & \dots & -(w_{n-2}+w_{n-1}) & w_{n-1} \\
w_n & 0 & \dots & w_{n-1} & -(w_{n-1}+w_n) \\
\end{bmatrix}.
\]
By induction, we can show that the determinant of the lower-right \(m \times m\) submatrix of \(S'\) is the \(m\)th symmetric sum of the numbers \(-w_{n-m},-w_{n-m+1},\ldots,\) \(-w_n\) for \(1 \le m <n\). Call this symmetric sum \(d_m\), and note that for \(m ge 2\), \[d_m=-w_{n-m}d_{m-1}+(-1)^mw_{n-m+1}\ldots w_{n-1}w_n.\]

The base cases of \(m=1\) and \(m=2\) are easily verified, giving determinants of \(d_1\) and \(d_2\). Assuming that the hypothesis is true for \(m\) and \(m-1\) with \(m \ge 2\), we can evaluate the determinant of the lower-right \(m+1 \times m+1\) matrix to get
\begin{align*}
    &-(w_{n-m-1}+w_{n-m})d_m+w_{n-m}^2d_{m-1} \\
    =& -w_{n-m-1}d_m-w_{n-m}(d_m-w_{n-m}d_{m-1}) \\
    =& -w_{n-m-1}d_m-w_{n-m}(-1)^mw_{n-m+1}\ldots w_{n-1}w_n \\
    =& \quad d_{m+1}.
\end{align*}

We only need to consider determinants of principal submatrices that correspond to connected components of the cycle graph, because other can be expressed in terms of those. Note that if all \(w_i\) are negative, we automatically satisfy all the necessary inequalities. Otherwise, we have at least one constraint \(d_{n-1} \ge 0\) and note that for \(w_i\) all nonzero it is equivalent to
\[(-w_1)(-w_2)\ldots (-w_n)\left(\frac{-1}{w_1}+\ldots+\frac{-1}{w_n}\right) \ge 0.\]
As shown earlier, at most one \(w_i\) can be positive, so if exactly one \(w_i\) is positive, then we must have
\[\frac{1}{w_1}+\ldots+\frac{1}{w_n} \ge 0,\]
as desired. All other constraints follow similarly from this single constraint; thus, this constraint is both necessary and sufficient to describe when the cycle graph has a solution with \(x\) as the all ones vector, where not all weights are negative. For any other vector \(x\), the weights \(w_i\) will simply be replaced instead by the values \(x_ix_{i+1}w_i,\) and and the final necessary and sufficient constraint is that there exists a weight \(w_m\) such that
\[\frac{1}{|w_m|} \ge \sum_{i \neq m} \frac{1}{|w_i|}.\]
\end{proof}

\begin{proof}[Proof of \Cref{cor:uniquecycsol}]
If the first condition (an even number of positive edges) is satisfied, then the vector \(x\) is the unique vector (up to multiplication by \(-1\)) so that every \(w_ix_ix_{i+1}\) is negative.

Otherwise, we have some weight \(w_m\) such that
\[\frac{1}{|w_m|} \ge \sum_{i \neq m} \frac{1}{|w_i|}.\]
Note that clearly, \(|w_m|<|w_i|\) for all \(i \neq m\), and as a result \(m\) is unique. The the unique solution to the max-cut problem (and thus the unique solution to the max-cut SDP) is the vector \(x\) such that \(w_ix_ix_{i+1}\) is negative for all \(i \neq m\), and positive for \(i=m\).

In both cases, a rank~1 solution for the cycle graph must be unique.
\end{proof}

\subsection{Max-cut SDP for a vertex sum}
\begin{proof}[Proof of \Cref{thm:vertexsum}]
Due to the dual feasibility of \(S_1,S_2\) we know \(S'\) is a sum of two positive semidefinite matrices, so evidently \(S'\) is dual feasible. Now note \(X'=\begin{bmatrix}
X_1 & x_1y_2^T \\ y_2x_1^T & Y_2
\end{bmatrix} = \begin{bmatrix}
Y_1 & y_1x_2^T \\ x_2y_1^T & X_2
\end{bmatrix}\).
From here, we see
\begin{align*}
    X'S' &= \begin{bmatrix}
X_1 & x_1y_2^T \\ y_2x_1^T & Y_2
\end{bmatrix} \begin{bmatrix}
S_1 & 0 \\ 0 & 0
\end{bmatrix} + \begin{bmatrix}
Y_1 & y_1x_2^T \\ x_2y_1^T & X_2
\end{bmatrix} \begin{bmatrix}
0 & 0 \\ 0 & S_2
\end{bmatrix} \\
&= \begin{bmatrix}
X_1S_1 & 0 \\ y_2x_1^TS_1 & 0
\end{bmatrix} + \begin{bmatrix}
0 & y_1x_2^TS_2 \\ 0 & X_2S_2
\end{bmatrix} = \begin{bmatrix}
0 & 0 \\ 0 & 0
\end{bmatrix}.
\end{align*}
From the fact that \(X_1S_1=X_2S_2=0.\)

Now, let \(V\) be a matrix with \(n\) columns and column size equal to the rank of \(X_1\) such that \(V^TV=X_1\). Let \(v\) be the final column of \(v\). Then we know \(V^Tv\) is the last column of \(X_1\), which is \(x_1\), and similarly \(v^Tv=1\). Note that
\[\begin{bmatrix}
V^T \\ y_2v^T
\end{bmatrix} \begin{bmatrix}
V & vy_2^T
\end{bmatrix} = \begin{bmatrix}
V^TV & y_2v^TV \\ V^Tvy_2^T & y_2v^Tvy_2^T
\end{bmatrix} = \begin{bmatrix}
X_1 & y_2x_1^T \\ x_1y_2^T & y_2y_2^T
\end{bmatrix}.\]
Furthermore, note that by the Schur Complement, \(X_2 \succeq 0, 1 \succ 0 \implies Y_2 \succeq y_2y_2^T.\) As a result,
\[X'= \begin{bmatrix}
V & vy_2^T
\end{bmatrix}^T \begin{bmatrix}
V & vy_2^T
\end{bmatrix} + \begin{bmatrix}
0 & 0 \\ 0 & Y_2-y_2y_2^T
\end{bmatrix} \succeq 0.\]
Finally, Let \(V_2\) be a matrix with \(V_2^TV_2=Y_2-y_2y_2^T\). Then note
\[X'= \begin{bmatrix}
V & vy_2^T \\ 0 & V_2
\end{bmatrix}^T \begin{bmatrix}
V & vy_2^T \\ 0 & V_2
\end{bmatrix}^T.\]
Since such a representation is an upper block diagonal matrix, we know \(\rank X'= \rank V + \rank V_2 = \rank(V^TV)+\rank(V_2^TV_2) = \rank(X_1)+ \rank(Y_2-y_2y_2^T)\). By properties of the Schur Complement, we thus have \(\rank X'= \rank X_1 + \rank X_2 - 1\).

Thus, by \Cref{thm:optcondition}, \((X',S')\) satisfies all conditions of optimality.
\end{proof}

\begin{proof}[Proof of \Cref{cor:vtxsumranks}]
We now have a solution \(X'\) to \(L(G')\) of rank \(\rank X_1 + \rank X_2 - 1\). Consider \(X'\) in the form \(X'=\begin{bmatrix}
Y_1 & y_1 & y_1y_2^T \\
y_1^T & 1 & y_2^T \\
y_2y_1^T & y_2 & Y_2
\end{bmatrix}\) again. The optimization problem to solve for \(X'\) involves maximizing the dot of \(X'\) and \(L(G')\). Thus, modifying any entry of the upper right or lower left blocks in the matrix will not change the optimal value. It is known that we are able to complete these remaining values of the block matrix to a semidefinite matrix of rank equal to \(\max(\rank X_1, \rank X_2)\), which gives the same objective value as \(X'\) and is thus still optimal. 
\end{proof}

This corollary means that any graph which can be expressed as a vertex sum of two cliques can be analyzed in terms of its two summed components, and this analysis is no harder than solving the max-cut problem simply for the two components. Thus, we can reduce our analysis to biconnected graphs.

\subsection{Edge sum of two triangles}

\begin{proof}[Proof of \Cref{thm:diamond1}]

The rank~1 solutions to the max-cut SDP on the smaller subgraphs give optimal solutions for the max-cut problem on such graphs. By \Cref{cor:uniquecycsol}, the optimal solutions are unique. Thus, the optimal solution to the max-cut problem on the graph \(G\) is unique and agrees with the solutions on the subgraphs \(G_1\) and \(G_2\).

Suppose now that \(w_{23} \le \min(\frac{1}{1/w_{12}+1/w_{13}},\frac{1}{1/w_{24}+1/w_{34}})\). Let \(G'_1\) be the graph on \(4\) vertices with \(w'_{12}=w_{12},w'_{13}=w_{13},\) and \(w'_{23}=\frac{1}{2}w_{23},\) and all other weights \(0\). Similarly, let \(G'_2\) with \(w'_{24}=w_{24},w'_{34}=w_{34},\) and \(w'_{23}=\frac{1}{2}w_{23},\) and all other weights \(0\).  Then, by \Cref{thm:cyclerank1}, the vector \(x=(-1,1,1,-1)\) is an optimal solution to the max-cut problem on both \(G'_1\) and \(G'_2\). By \Cref{cor:uniquecycsol}, this solution is unique on both \(G'_1\) and \(G'_2\). Thus, the matrix \(xx^T\) is indeed the unique optimal solution for the max-cut SDP on the full graph \(G\).

On the other hand, if we know that \(xx^T\) is a solution to the max-cut SDP for \(G\) where \(x=(-1,1,1-1)\), then \(x\) is in fact a solution to the max-cut problem on each of \(G,G_1,\) and \(G_2\). Since we are given that \(G_1\) and \(G_2\) are both rank~1, then by \Cref{thm:cyclerank1}, one of the weights \(w_{12},w_{13}\) and \(w_{23}\) has a reciprocal greater than the sum of the reciprocals of the other two terms. \Cref{cor:uniquecycsol} tells us that this is the unique weight such that \(w_{ij}w_iw_j\) is positive. Since all \(w_{ij}\) are positive and \(x_2x_3=1\), this weight is \(w_{23}\). As a result,
\[\frac{1}{w_{23}}<\frac{1}{w_{12}}+\frac{1}{w_{13}}.\]

Now, suppose that \(xx^T\) is a primal solution, where \(x=(-1,1,1,-1)\). Then the dual solution \(S^*\) is the unique dual solution which agrees with \(L(G,w)\) on the off-diagonal values, and satisfies \(S^*x=0\). Note that
\[S^*=\begin{bmatrix}
    S_1 & 0 \\
    0 & 0
    \end{bmatrix}+
    \begin{bmatrix}
    0 & 0 \\
    0 & S_2
     \end{bmatrix}+\frac14
    \begin{bmatrix}
    0 & 0 & 0 & 0 \\
    0 & -w_1 & w_1 & 0 \\
    0 & w_1 & -w_1 & 0 \\
    0 & 0 & 0 & 0
    \end{bmatrix}.\]
\end{proof}
satisfies both of these conditions, and thus must be the unique dual solution.

\begin{proof}[Proof of \Cref{thm:diamond2}]
Suppose for the sake of simplicity that \(w_{12}>w_{13}\) and \(w_{24}>w_{34}\). Then first assume the matrix \(X=xx^T\), where \(x=(\epsilon_1,-1,1, \epsilon_2)\), is a solution. Repeating the argument from the previous proof, the vector \(x\) must be a solution to the max-cut problem for each of \(G,G_1\) and \(G_2\). By exhaustively checking all cases to solve the max-cut problem on \(G_1\) and \(G_2\), we find that \(x=(1,-1,1,1)\). Now, the dual matrix has the form \(S'=L(G,\bar{w})\), where \(\bar{w}_{ij}=-x_ix_jw_{ij}\) by \Cref{lem:rankone}. The upper 3 by 3 principal submatrix of \(S'\) has entries
\[
\begin{bmatrix}
w_{12}-w_{13} & -w_{12} & w_{13} \\
-w_{12} & w_{12}+w_{23}+w_{14} & -w_{23} \\
w_{13} & -w_{23} & -w_{13}+w_{23}-w_{34} \\
\end{bmatrix}.\]
The determinant of this matrix simplifies to
\[w_{23}(w_{12}-w_{13})(w_{24}-w_{34})-w_{12}w_{13}w_{24}+w_{12}w_{13}w_{34}-w_{12}w_{23}w_{24}+w_{13}w_{14}w_{24}.\]
Since \(S'\) is positive semidefinite, then this determinant must be nonnegative. For the determinant to be positive, \(w_{23}\) must satisfy
\[w_{23} \ge \frac{1}{1/w_{12}-1/w_{13}}+\frac{1}{1/w_{24}-1/w_{34}},\]
as desired.

On the other hand, if the weights satisfy both inequalities given in the first condition, then we can see for some \(\epsilon_1,\epsilon_2 \in \{-1,1\}\) it is true that when \(x=(\epsilon_1,-1,1, \epsilon_2)\), then \(X=xx^T\) is an optimal solution to any graph on \(4\) vertices such that it has \(3\) edges which form a triangle and the weights satisfy the condition in \Cref{thm:cyclerank1}. Since we can find positive real numbers \(p\) and \(q\) such that \(p+q=w_{23}\), \(p \ge \frac{1}{1/w_{12}-1/w_{13}}\), and \(q \ge \frac{1}{1/w_{24}-1/w_{34}}\). Thus, since \(X\) is an optimal solution to both of these graphs, it is an optimal solution for the large graph as well.

Note that again, similar to above, the existence of the primal optimal solution allows us to exactly find the dual solution of the matrix. We can check that
\[S^*=\begin{bmatrix}
    S_1 & 0 \\
    0 & 0
    \end{bmatrix}+
    \begin{bmatrix}
    0 & 0 \\
    0 & S_2
    \end{bmatrix}+\frac14
    \begin{bmatrix}
    0 & 0 & 0 & 0 \\
    0 & w_1 & w_1 & 0 \\
    0 & w_1 & w_1 & 0 \\
    0 & 0 & 0 & 0
    \end{bmatrix}\]
indeed agrees with the Laplacian matrix on the off-diagonal values and that \(S^*x=0\), thus it must be the unique dual solution.
\end{proof}

\section{Future work}\label{sec:future work}

In our project, we began by considering a vertex sum, or what happened when we joined two subgraphs at a vertex. We were able to fully characterize a primal and dual solution to the SDP given solutions to the SDPs for the subgraphs. Doing the same for an edge sum, or joining two graphs at an edge, proved to be much more difficult. We were able to show a result for joining two \(K_3\)s given a constraint on the rank. Future work may look to extend our results to the max-cut SDP for general edge sums or larger clique sums. To account for more general clique sums, one possible direction to look at is to add two graphs. In particular, the weight of an edge in this sum would be the sum of the corresponding edge weights of the summand graphs. This area is promising because it gave a good characterization for the theorem on the butterfly graph and for vertex sums.

A conjecture we have proposed for the solutions to the max-cut SDP of the edge sum of two graphs is as follows.
\begin{conjecture}
Let \(G_1\) and \(G_2\) be two graphs with vertex sets \(V_1=\{1,2,\ldots, m+1, m+2\},\) and \(V_2=\{m+1,m+2,\ldots, n-1, n\}.\) Let \((X_1,S_1)\) and \((X_2,S_2)\) be the primal-dual solution pairs to the max-cut SDPs on \(G_1\) and \(G_2\), respectively.

Let \(G\) be the edge sum of \(G_1\) and \(G_2,\) with common edge \((m+1,m+2).\) Let \((X^*,S^*)\) be the primal and dual optimal solutions to the graph.

Then, the following two statements are equivalent:
\begin{enumerate}
    \item \(X_1, X_2\) are rank~1 matrices and agree on the intersection.
    \item For some common choice of \(\pm\) on the off-diagonal \(w_1\)s, we have  \[S=\begin{bmatrix}
    S_1^* & 0 \\
    0 & 0
    \end{bmatrix}+
    \begin{bmatrix}
    0 & 0 \\
    0 & S_2^*
    \end{bmatrix}-\frac14
    \begin{bmatrix}
    0 & 0 & 0 & 0 \\
    0 & w_1 & \ \pm w_1 & 0 \\
    0 & \pm w_1 & w_1 & 0 \\
    0 & 0 & 0 & 0
    \end{bmatrix}.
    \]
    
    Here, the first two matrices are \(n\times n.\) The matrix \(S_1\) is \(m+2 \times m+2,\) and the matrix \(S_2\) is \(n-m \times n-m.\) In the last matrix, the \(w_1\)s are in entry \(A_{m+1,m+1}\) and \(A_{m+2,m+2}.\)
\end{enumerate}
\end{conjecture}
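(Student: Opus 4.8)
The strategy mirrors the ones used for the vertex sum (\Cref{thm:vertexsum}) and the diamond (\Cref{thm:diamond1,thm:diamond2}): construct an explicit primal--dual pair and verify the optimality criterion of \Cref{thm:optcondition}. Throughout write $e=(m{+}1,m{+}2)$, $w_1=w_{m+1,m+2}$, let $G\setminus e$ be $G$ with $e$ deleted, and $G_i^{-}$ be $G_i$ with $e$ deleted, so that $L(G,w)=L(G\setminus e,w)+L(e)$, where $L(e)$ is the single-edge Laplacian supported on $\{m{+}1,m{+}2\}$, and $L(G\setminus e,w)$ is the overlapping sum of $L(G_1^{-},w)$ and $L(G_2^{-},w)$ along those two vertices.

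To prove $(1)\Rightarrow(2)$ I would first glue the two rank~1 solutions. Writing $X_1=x_1x_1^{T}$, $X_2=x_2x_2^{T}$ with $x_1\in\{\pm1\}^{m+2}$, $x_2\in\{\pm1\}^{n-m}$, the hypothesis that they agree on the principal block indexed by $\{m{+}1,m{+}2\}$ forces $x_1$ and $x_2$ to agree on those coordinates after possibly negating one; fix signs and let $x\in\{\pm1\}^{n}$ be their common extension. The heart of the proof is to show $xx^{T}$ is primal optimal for $G$. I would establish this by comparing cuts: for every $z\in\{\pm1\}^{n}$,
\[
\cut_G(z)=\cut_{G_1}(z|_{V_1})+\cut_{G_2}(z|_{V_2})-w_1\,\mathds{1}[z_{m+1}\neq z_{m+2}],
\]
and since $x|_{V_i}$ is a maximum cut of $G_i$ by exactness of the rank~1 solutions, $xx^{T}$ is optimal for $G$ as soon as $x$ realizes the appropriate behaviour on $e$ relative to the sign of $w_1$. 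Granting primal optimality, complementary slackness pins down the optimal dual uniquely as $S^{*}=\Diag(y)-\tfrac14 L(G,w)$ with $y_i=\tfrac14 x_i(L(G,w)x)_i$, and $S^{*}\succeq0$ automatically because strong duality supplies \emph{some} optimal dual. Finally, matching $S^{*}$ to the stated block form is a computation: taking $S_1^{-},S_2^{-}$ to be the optimal duals of $G_1^{-},G_2^{-}$, one has $S^{*}=S_1^{-}\oplus S_2^{-}-\tfrac14 L(e)$ once the price vectors are checked to line up, and $-\tfrac14 L(e)$ is exactly the correction term with sign $\pm=-$; the other sign comes from the case where the optimal cut must flip the value on one endpoint of $e$, which (as in \Cref{thm:diamond2}) transfers a diagonal contribution into the off-diagonal entry.

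For $(2)\Rightarrow(1)$ I would run this backwards. If the optimal dual has the stated block form then it is positive semidefinite, so there is a primal optimum $\bar X$ with $\bar XS^{*}=0$; since the correction is supported on the two shared vertices, the block structure of $S^{*}$ lets one produce a rank~one $\bar X=\bar x\bar x^{T}$ whose restrictions $\bar x|_{V_i}\bar x|_{V_i}^{T}$ are primal feasible for $G_i$ and complementary to the corresponding dual block. Invoking \Cref{thm:optcondition} on each $G_i$, these restrictions are optimal and rank~one, they agree on the intersection since $\bar x$ is a single vector, and a uniqueness argument in the spirit of \Cref{cor:uniquecycsol} identifies them with $X_1,X_2$, giving $(1)$.

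The step I expect to be the main obstacle is the claim that $xx^{T}$ is primal optimal for $G$, equivalently that the candidate $S^{*}$ is positive semidefinite. When $w_1\,x_{m+1}x_{m+2}\ge0$ the correction term is itself positive semidefinite and $S^{*}\succeq0$ is immediate from $S_1^{-},S_2^{-}\succeq0$; this is the benign regime, essentially \Cref{thm:diamond1}. When $w_1\,x_{m+1}x_{m+2}<0$ the correction is negative semidefinite, and positivity of $S^{*}$ becomes a genuine inequality between $|w_1|$ and the quadratic forms of $S_1^{-},S_2^{-}$ along the relevant overlap vector --- exactly the harmonic-mean-type conditions of \Cref{thm:diamond2}. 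The diamond analysis already exhibits weights for which $G_1$ and $G_2$ have rank~1 solutions agreeing on $e$ yet $G$ has no rank~1 optimum at all, so I expect a correct general statement to require either an additional hypothesis on $w_1$ (splitting into the two regimes of \Cref{thm:diamond1,thm:diamond2}) or a broader family of admissible correction terms; isolating that hypothesis and proving $S^{*}\succeq0$ under it is the crux of the problem.
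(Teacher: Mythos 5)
The statement you are addressing is not proved in the paper at all: it appears in \Cref{sec:future work} as an explicitly open conjecture, so there is no argument of the authors' to compare yours against, and your proposal must stand on its own. On those terms it is a strategy outline with two genuine gaps rather than a proof. The first gap is the one you flag yourself, and it is fatal to $(1)\Rightarrow(2)$ as you have set it up. Gluing the rank~1 optima $x_1,x_2$ into a common $x$ and invoking the cut identity only shows that $x$ is a good \emph{combinatorial} cut of $G$; it does not show that $xx^T$ is optimal for the \emph{SDP} on $G$, nor that the candidate dual obtained by overlapping $S_1$ and $S_2$ and subtracting the single-edge correction is positive semidefinite. The diamond analysis (\Cref{thm:diamond1,thm:diamond2}) shows this can genuinely fail: for $w_{23}$ strictly between the two thresholds both triangles have rank~1 optima agreeing on the shared edge, yet neither candidate rank~1 matrix is optimal for the diamond. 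So either the conjecture needs an extra hypothesis, or condition~(2) must be read as a statement about the dual alone that can hold even when the primal optimum has rank~2; your proposal does not resolve which, and ``isolating that hypothesis and proving $S^*\succeq 0$ under it'' is precisely the content of the conjecture, not a step that can be deferred.

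The second gap is in $(2)\Rightarrow(1)$. From $\bar X S^*=0$ you assert that one can ``produce a rank~one $\bar X=\bar x\bar x^T$'' and that its restrictions to $V_1,V_2$ are complementary to $S_1,S_2$. Neither is automatic: the kernel of $S^*$ may have dimension greater than one and contain no $\pm1$ vector (so no rank~1 primal optimum need exist), and even given such an $\bar x$, the condition $S_i(\bar x|_{V_i})=0$ does not follow from $S^*\bar x=0$, because the correction term and the overlapping rows at vertices $m{+}1,m{+}2$ redistribute the diagonal entries between the two blocks. Making that bookkeeping precise requires the same kind of explicit computation the paper carries out entry-by-entry for the diamond, and it is exactly where a uniqueness claim in the spirit of \Cref{cor:uniquecycsol} would have to be proved rather than invoked. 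As written, both directions of your argument reduce to the hard part of the conjecture instead of establishing it.
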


In addition, our analysis can be extended in the future to cover more families of graphs and operations upon them. In particular, we believe that series-parallel graphs are an interesting family of graphs to investigate. Series-parallel graphs can easily be decomposed into vertex sums, which means their rank one solutions may be able to be characterized.

\section{Acknowledgements}
We would like to thank the MIT PRIMES program for creating this opportunity for us to come together as a research group, without which this experience would not have been possible for any of us. We would also like to offer a special word of thanks to our research mentor, Diego Cifuentes, who provided us with invaluable resources and dedicated many hours to helping us make the most of this research experience.

\bibliographystyle{abbrv}
\bibliography{primes}
\end{document}